\definecolor{darkblue}{rgb}{0,0,0.35}
\definecolor{darkred}{rgb}{0.6,0,0}
\definecolor{darkgreen}{rgb}{0.1,0.35,0}
\definecolor{lightred}{rgb}{0.8,0.4,0.4}
\definecolor{purple}{rgb}{0.3, 0, 0.4}
\newcommand*{\condition}[1]{Condition~\ref{#1}}
\newtheorem{theorem}{Theorem}[section]
\newtheorem{lemma}[theorem]{Lemma}
\newtheorem{proposition}[theorem]{Proposition}
\newtheorem*{limit-theorem}{Limit Theorem under \condition{eq:smooth}}
\theoremstyle{definition}
\newtheorem{definition}[theorem]{Definition}
\newtheorem{problem}[theorem]{Problem}
\newtheorem{observation}[theorem]{Observation}
\newtheorem{question}[theorem]{Question}
\DeclareMathOperator{\Unif}{Unif}
\renewcommand{\Pr}[1]{\mathbb{P}\left(#1\right)}
\newcommand{\Ex}[1]{\mathbb{E}\left[#1\right]}
\DeclareMathOperator{\Bin}{Bin}
\DeclareMathOperator{\Po}{Po}
\newcommand{\N}{\mathbb{N}}
\newcommand{\Rplus}{\mathbb{R}^+}
\newcommand{\R}{\mathbb{R}}
\newcommand{\diam}{\mathrm{diam}}
\newcommand{\eps}{\varepsilon}
\newcommand{\pwitv}[1]{\mathbf{#1}}
\newcommand{\G}{G_0}
\newcommand{\D}{\mathfrak D}
\newcommand{\jmax}{J}
\newcommand{\finseq}{\N^{< \omega}}
\def\an(#1){a(#1)}
\def\Q(#1){Q(#1)}
\def\ppR(#1){R(#1)}
\def\U(#1){U(#1)}
\newcommand{\nondecreasing}{non-decreasing}
\newcommand{\strictlyincreasing}{strictly increasing}
\begin{document}

\title{Explosion and linear transit times in infinite trees}
\author{Omid Amini} \address{\,CNRS - DMA, \'Ecole Normale Sup\'erieure, Paris, France.}  \email{oamini@math.ens.fr}
\author{Luc Devroye} \address{\,School of Computer Science, McGill University, Montreal, Canada.} \email{lucdevroye@gmail.com}
\author{Simon Griffiths} \address{\,Department of Statistics,
University of Oxford, Oxford, United Kingdom.}\email{simon.griffiths@stats.ox.ac.uk} 
\author{Neil Olver} 
\address{\,Department of Econometrics \& Operations Research, VU University Amsterdam, The Netherlands; and CWI, The Netherlands.}\email{n.olver@vu.nl}

\begin{abstract}
 Let $T$ be an infinite rooted tree with weights $w_e$ assigned to its edges. 
 Denote by $m_n(T)$ the minimum weight  of a path from the root to a node of the $n$th generation. 
 We consider the possible behaviour of $m_n(T)$ with focus on the two following cases: we say $T$ is explosive if 
\[
\lim_{n\to \infty}m_n(T)\, <\, \infty\,,
\]
and say that $T$ exhibits linear growth if 
\[
\liminf_{n\to \infty}\, \frac{m_n(T)}{n}\, > \, 0\,.
\]

We consider a class of infinite randomly weighted trees related to the Poisson-weighted infinite tree, 
and determine precisely which trees in this class have linear growth almost surely.  
We then apply this characterization 
to obtain new results concerning the event of explosion in infinite randomly weighted
spherically-symmetric trees, answering a question of 
Pemantle and Peres~\cite{PP}.  As a further application, we consider the random
real tree generated by attaching sticks of deterministic decreasing lengths, 
and determine for which sequences of lengths the tree has finite height almost surely. 
\end{abstract}

\maketitle

\section{Introduction}

Let i.i.d.\ random weights $w_e$ be assigned to the edges of an 
infinite rooted tree $T$, and let $m_n(T)$ 
denote the minimum weight of a path from the root to a node of the $n$th generation. 
In the context of first passage percolation, looking at the weight of an edge 
as the transition time between the two corresponding nodes, $m_n(T)$ is the first passage 
time to the $n$th generation.  
We consider the possible behaviour of $m_n(T)$ with particular focus on the following cases: 
we say $T$ is \emph{explosive} if 
\[
\lim_{n\to \infty}m_n(T)\, <\, \infty\,,
\]
and say that $T$ exhibits \emph{linear growth} if 
\[
\liminf_{n\to \infty}\, \frac{m_n(T)}{n}\, > \, 0\,.
\]

 In the case where the tree $T$ is itself a random Galton-Watson tree conditioned on the survival, 
 the quantity $m_n$ also occurs as the minimal $n$th generation position in a branching random walk in $\mathbb R$.  
 The linear growth property goes at least back to the work of Hammersley~\cite{Ham74}, 
 Kingman~\cite{Kin75}, and Biggins~\cite{Big76}. 
 Many other, including very recent, results~\cite{AdRe09, Ai11, AiSh11, ADGO, Bra78, DeHo91,  HuSh09, McD95, Vat96} on the behavior of $m_n$ in the context of branching random walks 
 are known, we refer to~\cite{ADGO} and the discussion in the 
 introduction there for a short survey. The literature on explosion is partially surveyed by Vatutin and
Zubkov~\cite{VZ93}.

 We assume now that the tree $T$ is deterministic. 
  Pemantle and Peres introduced the concept of stochastic dominance between trees and proved 
  in~\cite{PP} 
  that amongst trees with a given sequence of generation sizes, explosion is most likely in the case that 
  the tree is spherically symmetric. Recall that a tree $T$ is called spherically symmetric if all the vertices at generation $n$ have the same number $f(n)$ of 
  children, for some function $f: \mathbb N \cup \{0\} \rightarrow \mathbb N$. 
Pemantle and Peres proved that for a spherically symmetric tree $T$ with a non-decreasing 
branching function $f$, and with weights $w_e$ independent exponential random variables of mean one,  the probability of the event of 
explosion is 0 or 1 according to 
whether the sum $\sum_{n=0}^\infty f(n)^{-1}$ is infinite or finite. They also showed that the same statement holds for weight random variables with distribution function $G$ satisfying 
$\lim_{t\rightarrow 0} G(t)t^{-\alpha}=c >0$ for some $\alpha>0$. 
 Furthermore, they asked if the same simple explosion criterion holds for general 
edge weight distributions, under reasonable assumptions.

One of our aims in this paper is to answer, essentially completely, this question of Pemantle and Peres.
In order to do so, we consider a class of infinite weighted trees related to the Poisson-weighted infinite tree, the PWIT, 
introduced by Aldous~\cite{Ald92, AS}.  Since its introduction, the PWIT has been identified as the limit object of the solutions 
of various combinatorial optimization problems. The survey by Aldous and Steele~\cite{AS} provides a general overview with several examples of 
applications; see also~\cite{AdGK, BCC} for some more recent applications.

 As a cornerstone of all our results, we determine precisely which trees in this class of 
generalized PWITs have linear growth almost surely. We then present two
applications of this result:

\noindent $-$ First, we provide in Sections~\ref{sec:explosion} 
and~\ref{sec:examples} our results concerning the event of explosion in 
spherically-symmetric trees, generalizing the results of~\cite{PP}, and

\noindent $-$ Second, we consider general classes of 
random real trees constructed via a stick breaking process on $\Rplus$, 
in a similar way that Aldous' CRT~\cite{Ald91} is constructed, 
and give in Section~\ref{sec:stick} a criterion for these random real trees to have finite 
diameter almost surely.

In the remainder of this introduction, after summarizing our notation, we state our main results.

\subsection*{Basic definitions and notation}
 Given edge weights $w_e$ for each $e \in E(T)$, we write $w(\gamma)$ for the sum of the weights in a path $\gamma$.
    We write $T_n$ for the nodes of the $n$th generation (i.e., at distance $n$ from the root), and $\Gamma_{n}(T)$ for the family of paths from the root to $T_n$.  In this notation, $m_n(T):=\min_{\gamma\in \Gamma_n(T)}w(\gamma)$.

Recall that a spherically symmetric tree with \emph{branching function} $f:\N_0\to \N$ is 
the rooted tree $T_f$ 
in which the root has $f(0)$ children and each node of $(T_f)_n$ has $f(n)$ children (here 
$\mathbb N_0:=\mathbb N \cup\{0\}$). 
We write $F(n)$ for $\prod_{i=0}^{n-1}f(i)$, and note that $F(n)=|(T_f)_n|$.
We shall tend to focus on the case that $f$ is \nondecreasing{}. 

A Poisson point process of intensity $\lambda\in \Rplus$ is a point process $P$ on the positive real line such that for each pair of disjoint intervals $[a,b]$, $[c,d]$ we have
\begin{enumerate}[(i)]
\item $|P\cap [a,b]|$ is distributed as $\Po(\lambda(b-a))$, and,
\item $|P\cap [a,b]|$ and $|P\cap [c,d]|$ are independent.
\end{enumerate}

More generally, given a (measurable) function $\lambda:\Rplus \to \Rplus$, 
which is locally integrable and satisfies $\int_0^\infty \lambda = \infty$, the inhomogeneous Poisson point process $P^{\lambda}$ 
is a point-process on the positive real line such that for each pair of disjoint intervals $[a,b]$, $[c,d]$ we have
\begin{itemize}
\item[(i)] $|P^\lambda\cap [a,b]|$ is distributed as $\Po(\int_{a}^{b}\lambda)$, and,
\item[(ii)] $|P^\lambda\cap [a,b]|$ and $|P^\lambda\cap [c,d]|$ are independent.
\end{itemize}

We denote by $P(j)$ the position of the $j$th smallest particle of the point process $P$.

We now define a class of infinite trees that generalize the Poisson-weighted infinite tree (which corresponds to the case $\lambda$ is the constant function with value $1$).  
We shall use $\N^{< \omega}$ to denote the set of finite (ordered) 
sequences of natural numbers. A typical element of  $\N^{< \omega}$ is denoted by 
$\pwitv{i}$, and for an integer $j\in \N$, the sequence $\pwitv{i}j$ is obtained from $\pwitv{i}$ by inserting $j$ to the very right end of the sequence.

\begin{definition}
The P$^\lambda$WIT, which we denote by $T^{\lambda}$, has vertices labelled by $\finseq$, with $\emptyset$ labelling the root, and edge set 
\[
\{ \;\{\pwitv{i},\pwitv{i}j\}\;:\pwitv{i}\in \finseq, j\ge 1\}\, .
\]

Associate to each vertex $\pwitv{i}$ an independent point process $P^{\lambda}_{\textbf{i}}$ 
(distributed as $P^{\lambda}$), and give edge $\{\pwitv{i},\pwitv{i}j\}$ of $T^\lambda$ 
the weight $P^{\lambda}_{\textbf{i}}(j)$.
\end{definition}

\subsection{Linear growth in generalizations of the PWIT}
The Poisson-weighted infinite tree exhibits linear growth almost surely.  It is therefore natural to ask how general this property is in the generalizations of the PWIT defined above.  We answer this question completely.  Furthermore, we provide exponential probability bounds for the event that $m_n(T)$ grows more slowly.

\begin{theorem}\label{thm:PlambdaWITbis}
Let $\lambda: \Rplus \to \Rplus$ be any locally integrable function with $\int_0^\infty \lambda = \infty$. 
Then we have the following dichotomy.

\begin{enumerate}[(i)]
    \item If either there exists some $t>0$ such that 
    $\int_0^t\lambda =0$, or there exists some $C > 0$ so that $\int_0^x \lambda \leq C^x$ 
    for all $x \in \Rplus$, then there exists $\alpha>0$ such that
\[
\lim_{n\rightarrow \infty} \frac {m_n(T^{\lambda})}{n} = \alpha
\]
almost surely. In particular, $T^\lambda$ has linear growth, almost surely.

Furthermore, for each $K>0$, there exists $\delta>0$ such that
\[
\Pr{m_n(T^{\lambda})<\delta n}\, \le \, e^{-Kn}\, .
\]

\item Otherwise, $T^\lambda$ does not have linear growth, almost surely.
\end{enumerate}
\end{theorem}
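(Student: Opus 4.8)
The plan is to treat the two alternatives of the dichotomy by entirely different methods, linked only through the cumulative intensity $\Lambda(x):=\int_0^x\lambda$, which is finite, continuous and non-decreasing with $\Lambda(x)\to\infty$. The engine for part (i) is the following factorisation. By Campbell's formula for the Poisson process $P^\lambda$, for every $\theta>0$ one has $\E\bigl[\sum_{j\ge1}e^{-\theta P^\lambda(j)}\bigr]=\int_0^\infty e^{-\theta s}\,d\Lambda(s)=:\phi(\theta)$, and since the point processes attached to distinct vertices of $T^\lambda$ are independent, summing $e^{-\theta w(\gamma)}$ over the paths of $\Gamma_n(T^\lambda)$ (a choice of child at each of $n$ levels) yields, by Tonelli,
\[
\E\Bigl[\textstyle\sum_{\gamma\in\Gamma_n(T^\lambda)}e^{-\theta w(\gamma)}\Bigr]\;=\;\phi(\theta)^n.
\]
Part (ii), by contrast, will be a direct construction of a cheap path.

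For part (i), first note that if $\int_0^t\lambda=0$ for some $t>0$ then every edge weight is at least $t$, so $m_n\ge tn$ and the exponential bound holds trivially with any $\delta<t$. Assume then that $\Lambda(x)\le C^x$ for all $x$. Integration by parts gives $\phi(\theta)=\theta\int_0^\infty e^{-\theta s}\Lambda(s)\,ds\le\theta/(\theta-\log C)<\infty$ for $\theta>\log C$; splitting the integral defining $\phi$ at a small $\delta_0$ with $\Lambda(\delta_0)$ small and bounding the remaining tail using this finiteness shows moreover that $\phi(\theta)\to0$ as $\theta\to\infty$. Now, given $K>0$, pick $\theta$ with $\phi(\theta)\le e^{-2K}$ and set $\delta:=K/\theta$; a first-moment (Markov--Chernoff) estimate then gives
\[
\Pr{m_n(T^\lambda)<\delta n}\;\le\;\E\Bigl[\textstyle\sum_{\gamma\in\Gamma_n}\mathbf{1}[w(\gamma)<\delta n]\Bigr]\;\le\;e^{\theta\delta n}\phi(\theta)^n\;\le\;e^{-Kn},
\]
which is the claimed bound. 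By Borel--Cantelli this forces $\liminf_n m_n/n\ge\delta>0$ almost surely. For the existence of the limit we use subadditivity: conditioning on the minimiser of $m_n$, the subtree below its endpoint is an independent copy of $T^\lambda$, so $m_{n+k}$ is stochastically dominated by $m_n$ plus an independent copy of $m_k$; applying Kingman's subadditive ergodic theorem to the truncated process $m_n\wedge cn$ (for a level $c$ with $\Lambda(c)>1$) and letting $c\to\infty$ gives $m_n/n\to\alpha$ almost surely, while the embedded Galton--Watson tree of edges of weight at most $c$, which is supercritical, produces a path witnessing $\limsup_n m_n/n\le c<\infty$; together with the lower bound this gives $\alpha\in(0,\infty)$.

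For part (ii) we are in the complementary regime: $\Lambda(t)>0$ for all $t>0$ and, for every $C>0$, there is $x$ with $\Lambda(x)>C^x$. Fix $\eps\in(0,1)$; we build, almost surely, a path reaching generation $n$ of total weight at most $2\eps n+o(n)$. If $\Lambda(\eps)\ge2$ this is immediate from the supercritical embedded Galton--Watson tree of edges of weight $\le\eps$. Otherwise set $B:=\eps^{-1}\log(2/\Lambda(\eps))$, choose $x$ with $\Lambda(x)\ge e^{Bx}$, and put $\ell:=\lceil x/\eps\rceil$. Consider the embedded, generation-inhomogeneous branching process in which a live vertex at generation $j$ retains as offspring those children joined by an edge of weight at most $\beta_j$, where $\beta_j=x$ if $\ell\mid j$ and $\beta_j=\eps$ otherwise; the mean offspring at generation $j$ is $\Lambda(\beta_j)$. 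Over one block of $\ell$ consecutive generations the product of the means is $\Lambda(\eps)^{\ell-1}\Lambda(x)\ge 2^{\ell-1}e^{B(x-(\ell-1)\eps)}\ge 2$ (using $\Lambda(\eps)=2e^{-\eps B}$ and $(\ell-1)\eps\le x$), so block-by-block the process is a supercritical Galton--Watson process, while its per-generation weight budget is $((\ell-1)\eps+x)/\ell\le2\eps$. A supercritical Galton--Watson process survives forever with positive probability, and the root of $T^\lambda$ has infinitely many children whose subtrees are independent copies of $T^\lambda$; hence almost surely the construction succeeds in at least one of them, giving a path with $m_{m\ell+1}(T^\lambda)\le2\eps m\ell+O(1)$ for all $m$, whence $\limsup_n m_n(T^\lambda)/n\le2\eps$. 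Letting $\eps\to0$ yields $m_n/n\to0$ almost surely, so $T^\lambda$ does not have linear growth.

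The main obstacle is the construction in part (ii): neither a uniform ``cheap'' budget (which leaves the embedded branching process subcritical unless $\Lambda(\eps)>1$) nor a uniform ``expensive'' budget (which costs a non-negligible amount per generation) works on its own. The idea is to interleave many cheap steps with a single rare step at a scale $x$ where $\Lambda(x)$ is astronomically large, calibrating $x$ and the block length $\ell$ so that this one step alone restores supercriticality at an arbitrarily small amortised cost per generation. A secondary technical point, in part (i), is that $\E[m_n]$ can be infinite when $\Lambda$ grows slowly, so the passage from the exponential lower-tail estimate to the almost-sure convergence $m_n/n\to\alpha$ must be routed through the truncated process $m_n\wedge cn$ before invoking the subadditive ergodic theorem.
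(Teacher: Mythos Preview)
Your argument is essentially correct. For part~(i) it follows the same route as the paper: both compute the Laplace functional $\phi(\theta)=\E\bigl[\sum_j e^{-\theta P^\lambda(j)}\bigr]$ and show it tends to $0$ as $\theta\to\infty$, then feed this into a Chernoff/first-moment bound. Your integration-by-parts derivation of $\phi(\theta)\to0$ is in fact cleaner than the paper's Lemma~2.2, which instead reduces to a monotone $\lambda$ on $[0,1]$ and bounds the increments $P^\lambda(j)-P^\lambda(j-1)$ by stochastic domination. One caveat: your passage to the almost-sure limit $m_n/n\to\alpha$ via ``Kingman's subadditive ergodic theorem applied to $m_n\wedge cn$'' is imprecise, since the single-indexed sequence $m_n$ does not by itself carry the stationary doubly-indexed array that theorem requires; the paper avoids this by invoking Kingman's 1975 first-birth theorem for branching processes directly, which is the right black box here.

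Part~(ii) is where you genuinely diverge. The paper makes a \emph{single} expensive step at the root (keeping all edges of weight $<x$, which by the hypothesis yields $\mathrm{Po}(\Lambda(x))$ children with $\Lambda(x)$ enormous), then from each such child runs only cheap edges of weight $<\delta/2$; the point is that there are so many first-generation children that even the tiny probability $p^{n-1}$ of a cheap path of length $n-1$ succeeds for some child. Your construction instead \emph{periodically} inserts one expensive step of budget $x$ every $\ell\approx x/\eps$ generations, so that block-by-block the embedded inhomogeneous branching process has mean offspring product $\ge2$ and amortised cost $\le2\eps$ per generation. Both ideas exploit the same phenomenon---a single scale $x$ at which $\Lambda(x)$ is astronomically large relative to $e^{Bx}$---but yours yields an honest supercritical Galton--Watson process and hence an infinite path directly, whereas the paper's gives only a depth-$n$ path for a suitably chosen $n=n_\delta\to\infty$. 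A small point: in your block computation you need $\ell\ge2$ to get the product $\ge2^{\ell-1}\ge2$; this is guaranteed once you note (as the paper does) that the failure of condition~(i) lets you take $x\ge1$.
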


Note that if $\int_0^t \lambda=0$ for some $t>0$, then obviously $T^\lambda$ has linear growth. 
The main part of the theorem thus concerns the existence or the non-existence of 
a constant $C>0$ so that 
$\int_0^x \lambda \leq C^x$ for all $x\in \mathbb R^+$, which provides a dichotomy for 
linear growth 
in the trees $T^{\lambda}$. The proof appears in Section~\ref{sec:PlambdaWIT}. 

\subsection{Explosion in infinite trees}

Recall that we call a rooted weighted tree \emph{explosive} if 
\[
\lim_{n\to \infty}m_n(T)\, <\, \infty\, .
\]
Pemantle and Peres proved that amongst trees with a given sequence of generation sizes, 
explosion is most likely in the case that the tree is spherically symmetric.

Let $f:\N_0 \to \N$, and let $T_f$ denote the spherically-symmetric tree in which each node $v$ of generation $n$ has $f(n)$ children.  Given a distribution function $G:[0,\infty)\to [0,1]$, let $T_f^G$ denote the randomly weighted tree obtained by giving each edge of $T_f$ an i.i.d.\ weight distributed  according to $G$.

\begin{definition} Given the distribution $G$ and \nondecreasing{} 
function $f:\mathbb{N}_0\to \mathbb{N}$, we say that $f$ is \emph{$G$-explosive} 
if $T^G_f$ is explosive almost surely.  
\end{definition}

It is easy to see that a sufficient condition for $f$ being $G$-explosive 
is (a slightly stronger version of) the ``local min-summability'' condition
(compare to the ``global min-summability'' condition considered in~\cite{ADGO}): apply a greedy algorithm to 
construct an infinite path in the tree $T_f^G$ by starting from the root, and by choosing recursively for the end vertex 
$v_n$ of the already constructed path up to level $n$, the minimum weight edge $v_nv_{n+1}$ among the $f(n)$ adjacent 
edges to level $n+1$, see Proposition~\ref{prop:onepluseps}. This motivates the following definition.

\begin{definition} Given the distribution $G$ and a \nondecreasing{} function $f:\mathbb{N}\to \mathbb{N}$, we say that $f$ is \emph{$G$-small} if
\begin{equation}\label{Gsmall}
\sum_{n\ge 0} G^{-1}(f(n)^{-1})\, < \, \infty\, .
\end{equation}
\end{definition}

One may now interpret Pemantle and Peres~\cite[Page 193]{PP} as asking the following.

\begin{question}[Pemantle-Peres~\cite{PP}]\label{question:PP} For which $G$ does the equivalence
\[
f \text{ is  $G$-small} \, \Leftrightarrow \, f \text{ is $G$-explosive}\,
\]
hold in the class of \nondecreasing{} functions $f:\N_0 \to \N$?
\end{question}
Pemantle and Peres~\cite{PP} 
showed that if $G$ has a limit law at $0$ in the sense that 
$\lim_{x\rightarrow 0}G(x)x^{-\alpha}$ exists and is positive, then this equivalence holds.
They speculated that perhaps the equivalence holds provided $G$ is continuous and \strictlyincreasing{}.  
The following definition encodes robust versions of the properties of being continuous and 
\strictlyincreasing{}.  
\begin{definition} The distribution $G$ is \emph{controlled near $0$} if
\[
1\, <\, \liminf_{x\to 0}\frac{G(cx)}{G(x)}\, \leq \, \limsup_{x\to 0}\frac{G(cx)}{G(x)}\, <\, \infty\, ,
\]
for some constant $c>1$.
\end{definition}
Note that this is much weaker than the requirement that 
$G$ be regularly varying around $0$, which corresponds to the condition that the 
limit exists for any $c>1$ and lies in $(1,\infty)$, 
which is in turn weaker than the requirement that $G$ has a limit law at $0$. Thus, the following generalizes the result of 
Pemantle-Peres~\cite{PP} and, in spirit, confirms the validity of their speculation.

\begin{theorem}\label{thm:main} If $G$ is controlled near $0$, then the equivalence
\[
f \text{ is $G$-small} \, \Leftrightarrow \, f \text{ is $G$-explosive}\,
\]
holds in the class of \nondecreasing{} functions $f:\N \to \N$.
\end{theorem}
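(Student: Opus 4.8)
The plan is to prove the two implications separately; the forward one is brief, and the converse --- where Theorem~\ref{thm:PlambdaWITbis} does the work --- is the substantial part. For ``$f$ is $G$-small $\Rightarrow$ $f$ is $G$-explosive'' I would invoke the greedy construction of Proposition~\ref{prop:onepluseps}: under ``controlled near $0$'', $G$-smallness implies the summability hypothesis of that proposition, so the infinite path it produces has a.s.\ finite weight. Indeed, that path has weight $\sum_{n\ge 0}W_n$ with $W_n$ the minimum of the $f(n)$ i.i.d.\ $G$-distributed edge-weights below generation $n$ and the $W_n$ independent; writing $s_n:=G^{-1}(f(n)^{-1})$ and iterating the two-sided inequality defining ``controlled near $0$'' yields $\Pr{W_n>c^{j}s_n}\le(1-(c')^{j}/f(n))^{f(n)}\le e^{-(c')^{j}}$ for a fixed $c'>1$ and all $j\ge 0$, hence $\Ex{W_n}\le C s_n$ for an absolute constant $C$, so $\sum_n\Ex{W_n\wedge 1}\le C\sum_n G^{-1}(f(n)^{-1})<\infty$ and Kolmogorov's three-series theorem concludes.

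For the converse I would prove the contrapositive: if $f$ is non-decreasing with $\sum_n G^{-1}(f(n)^{-1})=\infty$, then $m_n(T_f^G)\to\infty$ almost surely. One may assume $G(t)>0$ for all $t>0$ (else edge-weights are bounded away from $0$ and $m_n\to\infty$ trivially), and then ``controlled near $0$'' also rules out an atom at $0$. If $f\le M$ is bounded, the expected number of generation-$n$ root-paths of weight at most $x$ is at most $M^{n}e^{\theta x}\Ex{e^{-\theta w_1}}^{n}$; since $\Ex{e^{-\theta w_1}}\to 0$ as $\theta\to\infty$, fixing $\theta$ with $M\Ex{e^{-\theta w_1}}<1$ makes this summable in $n$, so by Borel--Cantelli only finitely many generations are a.s.\ reachable within any given weight, and monotonicity of $m_n$ finishes the bounded case. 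If $f$ is unbounded, so $f(n)\to\infty$ and $s_n\to 0$, partition $\N_0$ into consecutive finite blocks $I_1,I_2,\dots$ with $\sum_{n\in I_k}s_n\asymp 1$; then $|I_k|\to\infty$. Collapsing $T_f$ along the blocks produces a tree in which a block-level-$k$ vertex has $d_k:=\prod_{n\in I_k}f(n)$ children, the super-edge to each distributed as the $|I_k|$-fold convolution $G^{*|I_k|}$; a positive-association argument (by induction on the number of blocks, using that the super-edge weights out of a vertex are non-decreasing functions of independent weights) shows that replacing these correlated super-edge weights by independent copies only decreases first-passage times, so it suffices to prove $m_k\to\infty$ for the resulting tree $\tilde T$. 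Since the subtree of $\tilde T$ below any vertex is itself a collapsed tree for a shifted (still non-decreasing, still not-$G$-small) branching function, and since discarding super-edges of weight $\ge 1$ cannot affect explosion, it suffices to show $m_k\to\infty$ at the root for the ``$<1$-subtree'' $\tilde T^{\circ}$, uniformly over shifts.

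Now I would invoke the superposition coupling $\Po(N\mu)\succeq\Bin(N,1-e^{-\mu})$ with $\mu$ the cumulative-hazard measure of $G^{*|I_k|}$: the block-level-$k$ children process of $\tilde T^{\circ}$ is then dominated from below (more points, all smaller) by a Poisson process of intensity measure ``$d_k$ times the cumulative hazard of $G^{*|I_k|}$'', restricted to $[0,1)$. The key point is that this family of measures is \emph{uniformly} bounded in $k$ on $[0,1)$; hence its supremum --- extended arbitrarily on $[1,\infty)$ so as to have infinite total mass --- is $\Lambda(x):=\int_0^x\lambda$ for some locally integrable $\lambda:\Rplus\to\Rplus$ with $\Lambda$ bounded near $0$ and at most linear beyond, so $\Lambda(x)\le C^{x}$ for some $C>0$. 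The P$^{\lambda}$WIT $T^{\lambda}$ thus has first-passage times at most those of $\tilde T^{\circ}$, and by Theorem~\ref{thm:PlambdaWITbis}(i) it grows linearly; therefore $m_k(\tilde T^{\circ})\to\infty$, so $\tilde T$ (hence $T_f^G$) is not explosive, i.e.\ $m_n(T_f^G)\to\infty$ almost surely.

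The hard part will be the uniform bound $\sup_k d_k\,G^{*|I_k|}(t)<\infty$ for $t\in[0,1)$. I expect to obtain it from the sandwich $G(t/m)^{m}\le G^{*m}(t)\le G(t)^{m}$ together with the controlled-near-$0$ comparison $G(t/m)\asymp m^{-\beta}G(t)$ (iterating the defining inequality), which localises $-\log G^{*|I_k|}(t)$ to leading order $\beta\,|I_k|\log|I_k|$ plus a strictly positive $\Theta(|I_k|\log(1/t))$ term; and from $\log d_k=\sum_{n\in I_k}\log f(n)$ together with the normalisation $\sum_{n\in I_k}s_n\asymp 1$ and $s_n\asymp f(n)^{-1/\beta}$, which force $\log d_k$ to have the same leading order $\beta\,|I_k|\log|I_k|$. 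Those leading terms then cancel, leaving $\log(d_k\,G^{*|I_k|}(t))\le -c\,|I_k|\log(1/t)+O(|I_k|)\to-\infty$ once $t$ is small, with the finitely many small-$k$ blocks handled directly. It is exactly this cancellation that makes ``controlled near $0$'' --- rather than merely ``not $G$-small'' --- indispensable: without two-sided control of $G$ at comparable scales one cannot match $\log d_k$ against $-\log G^{*|I_k|}$, and the dichotomy breaks. The remaining work --- making the positive-association comparison and the Poisson domination precise for the whole infinite tree, and controlling the error in the cancellation near the threshold --- is routine but requires care.
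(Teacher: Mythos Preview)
Your forward implication and the bounded-$f$ case are fine (the paper argues both slightly differently, but your versions are correct). The substantive gap is in the unbounded case, in the claimed uniform bound
\[
\sup_k\, d_k\,G^{*|I_k|}(t)\,<\,\infty \qquad (t<1).
\]
Your sketch for this relies on a \emph{single} exponent $\beta$: you write ``$G(t/m)\asymp m^{-\beta}G(t)$'' and ``$s_n\asymp f(n)^{-1/\beta}$'', both of which are statements of regular variation, strictly stronger than ``controlled near $0$''. The latter only yields two exponents $0<\alpha_-\le\alpha_+$ with $(y/x)^{\alpha_-}\lesssim G(y)/G(x)\lesssim (y/x)^{\alpha_+}$; the Chernoff bound then gives $-\log G^{*m}(t)\ge \alpha_-\,m\log m+O(m)$, while the best upper bound on $\log d_k=\sum_{n\in I_k}(-\log G(s_n))$ is of order $\alpha_+\,m\log m$. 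If $\alpha_-<\alpha_+$ the leading terms do not cancel and your supremum is infinite. In fact the difficulty already appears when $G$ is uniform (so $\alpha_-=\alpha_+=1$): with $f(n)=n$ and blocks $\sum_{n\in I_k}1/n\asymp1$ one has $m:=|I_k|\asymp e^k$, $\log d_k\approx km$, $G^{*m}(t)=t^m/m!$, and
\[
\log\bigl(d_k\,G^{*m}(t)\bigr)\;\approx\; km+m\log t-(m\log m - m)\;=\;m(\log t+1)+O(m),
\]
which tends to $+\infty$ for every $t>1/e$. So truncating super-edges at $1$ cannot give a finite dominating $\Lambda$ on $[0,1)$; you would at least have to make the block size and the truncation level interact, and for oscillating $G$ even that does not obviously close the gap.

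The paper avoids all of this by a different reduction: it renormalizes \emph{edge by edge}, setting $\hat w_e=w_e/a(|e|)$ with $a(n)=G^{-1}(1/f(n))$, and then (after an $\eta$-trim) couples each level with a \emph{single} P$^\lambda$WIT. The point is that ``controlled near $0$'' gives a function $h$ (with two different polynomial exponents on $[0,1]$ and $[1,\infty)$) such that $G(x)\le h(x/x_0)\,G(x_0)$ uniformly in $x_0$; substituting $x_0=a(n)$ makes the shape of the renormalized order statistics \emph{the same at every level}, so $\lambda:=2h'$ works uniformly and $\int_0^x\lambda=2h(x)\le C^x$. Linear growth of the renormalized tree then combines with a simple summation lemma (if $\sum b_n/a_n\ge\delta n$ and $\sum a_n=\infty$ with $a_n$ decreasing, then $\sum b_n=\infty$) to rule out explosion. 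Your block-collapsing route loses exactly this level-by-level uniformity, which is what the hypothesis is tailored to provide.
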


On the other hand, we give examples that demonstrate that the ``controlled near $0$'' condition cannot be significantly weakened. 
Firstly, we show that the equivalence  may fail in general in both 
directions even if we assume $G$ continuous and \strictlyincreasing{}:

\begin{proposition}\label{prop:examples} There exist a continuous, 
\strictlyincreasing{} weight distribution $G$, and \nondecreasing{} functions 
$f_1,f_2:\N_0\to \N$ with the following properties. 
\begin{enumerate}[(i)]
\item\label{itm:simplenoexpl} The function $f_1$ is $G$-small but not $G$-explosive.
\item\label{itm:explnotsmall} The function $f_2$ is $G$-explosive but not $G$-small.
\end{enumerate}
\end{proposition}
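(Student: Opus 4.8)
The plan is to build a single weight distribution $G$ whose behaviour near $0$ is as far from ``controlled'' as possible — alternating, along a sequence of geometrically decreasing scales, between a regularly-varying regime and a degenerate, very flat one — and then to choose $f_1$ and $f_2$ as non-decreasing step functions, each equal to a constant $\phi_k$ throughout a long block $B_k$ of $L_k$ consecutive generations, so that the two functions ``probe'' $G$ on different scales. For such step functions, $G$-smallness reads $\sum_k L_k\,G^{-1}(\phi_k^{-1})<\infty$, and the construction will ultimately be pinned down by choosing the scales, the block lengths $L_k$, and the exponents of the regularly-varying pieces fast-growing/fast-decaying enough that finitely many inequalities of the form ``this series converges / that one diverges'' hold simultaneously; this is routine bookkeeping once the qualitative picture is fixed, and I will not dwell on it.

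For part \romitemref{itm:simplenoexpl} the idea is to arrange that the quantiles $G^{-1}(\phi_k^{-1})$ probed by $f_1$ are small enough that $\sum_k L_k\,G^{-1}(\phi_k^{-1})<\infty$, so that $f_1$ \emph{is} $G$-small, while the genuine cost of first passage across the blocks $B_k$ — which, unlike the quantile sum, is sensitive to the whole shape of $G$ just above those quantiles (already visible in the greedy bound $\Ex{\min\text{ of }\phi_k\text{ i.i.d.\ weights}}=\int_0^\infty(1-G)^{\phi_k}$, which picks up the entire flat window) — diverges, so that $m_n(T^G_{f_1})\to\infty$ almost surely. The real work is this last step: a crude union bound over the $F(n)=\prod_{i<n}f_1(i)$ root-to-level-$n$ paths is useless here, because $f_1(n)\to\infty$ forces $F(n)^{1/n}\to\infty$, so one needs the sharp first-passage lower bounds established for the P$^\lambda$WIT. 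The mechanism is that, after the change of variables $t\mapsto \phi_k\,G(t)$, the collection of edge weights below a generation in $B_k$ converges to an inhomogeneous Poisson process of intensity $\phi_k g(t)$ (with $g=G'$), and on the degenerate scales of $G$ the integrated intensity $\phi_k\,G(\cdot)$ fails the exponential-growth bound appearing in Theorem~\ref{thm:PlambdaWITbis}\romitemref{itm:simplenoexpl}; transplanting the lower-bound half of that theorem, in its generation-dependent form, to $T^G_{f_1}$ then yields $m_n\to\infty$.

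For part \romitemref{itm:explnotsmall} one instead places the scales so that the quantiles $G^{-1}(\phi_k^{-1})$ probed by $f_2$ are comparatively large, with $\sum_k L_k\,G^{-1}(\phi_k^{-1})=\infty$ (so $f_2$ is \emph{not} $G$-small), but chosen so that on those same scales $G$ puts enough mass well below its $\phi_k^{-1}$-quantile that the effective branching-random-walk speed $\gamma(\phi_k,G)$ across $B_k$ — which by Cram\'er's theorem governs the minimal weight of a length-$L_k$ path of constant branching $\phi_k$, and which is always at most $G^{-1}(\phi_k^{-1})$ — is smaller by a factor tending to $0$. Concatenating these cheap block-crossings (each block-subtree below the endpoint of the previous crossing is fresh, so its minimal crossing really does have the expected order, up to a second-moment/Borel--Cantelli control of the fluctuations) produces an infinite root-ray of total weight comparable to $\sum_k L_k\,\gamma(\phi_k,G)<\infty$, so that $T^G_{f_2}$ is explosive almost surely.

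The main obstacle is part \romitemref{itm:simplenoexpl}: establishing non-explosion is a sharp lower bound on a first-passage time, and making it precise requires porting the Poisson-process estimates behind Theorem~\ref{thm:PlambdaWITbis} to the finite-branching, generation-dependent setting of $T^G_{f_1}$ and verifying that the degenerate scales of $G$ genuinely obstruct explosion there. Part \romitemref{itm:explnotsmall} is comparatively soft — a branching-random-walk/second-moment argument applied to a carefully engineered $G$ — and, together with the parameter bookkeeping of choosing the scales, the $L_k$, and the exponents so that all the convergence/divergence requirements hold at once, the rest of the proof is routine.
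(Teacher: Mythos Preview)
Your approach to part~\romitemref{itm:simplenoexpl} has a genuine gap. You propose to show $m_n(T^G_{f_1})\to\infty$ by Poissonizing the edge weights below a vertex in block $B_k$ to an inhomogeneous process of integrated intensity $\phi_k G(\cdot)$, observing that on the degenerate scales this intensity \emph{fails} the exponential bound in Theorem~\ref{thm:PlambdaWITbis}\,(i), and then ``transplanting the lower-bound half'' of that theorem. But failure of the exponential bound places you in case~(ii) of Theorem~\ref{thm:PlambdaWITbis}, whose conclusion is the \emph{absence} of linear growth --- an upper bound on $m_n$, not a lower one. There is no ``lower-bound half'' to transplant in that regime. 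More broadly, the whole coupling of $\hat{T}^G_f$ with a P$^\lambda$WIT (Lemma~\ref{lem:lambda}) is exactly what breaks down when $G$ is not controlled near $0$; that is the point of the proposition. So the P$^\lambda$WIT machinery is the wrong tool here, and your sketch does not contain an argument that $m_n\to\infty$.

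The paper's proof of \romitemref{itm:simplenoexpl} is far more elementary and uses a different idea entirely: it designs the step distribution $G_0$ so that $G_0(a_{k-1}^-)=1/B_k$. Then, within block $k$ (where $f\equiv B_k$), the set of edges of weight $<a_{k-1}$ is \emph{critical} bond percolation on the $B_k$-ary tree, whose clusters are a.s.\ finite. Taking the block long enough (this is where $n_k$ is chosen) forces, with probability $\ge 1/2$ after a union bound over generation-$n_{k-1}$ vertices, every path across block $k$ to use at least $1/a_{k-1}$ edges of weight $\ge a_{k-1}$, hence to have weight $\ge 1$. This happens for infinitely many $k$ almost surely, so $m_n\to\infty$. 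No P$^\lambda$WIT estimate is involved; the criticality of percolation is the whole mechanism. The continuous, strictly increasing $G$ is then obtained by a tiny perturbation of $G_0$.

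Your plan for \romitemref{itm:explnotsmall} via branching-random-walk speeds is in the right spirit, though the paper does it more simply by keeping an explicit forest of light edges (a supercritical phase followed by a barely-subcritical one within each block) and showing it survives; no Cram\'er asymptotics are needed. To get both parts with a \emph{single} $G$, the paper does not engineer one distribution for both from scratch: it checks that the $G$ built for \romitemref{itm:simplenoexpl} satisfies the hypotheses of Theorem~\ref{thm:lotsofcounters}, which then supplies $f_2$.
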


Secondly, a counterexample of the form (\ref{itm:explnotsmall}) holds in fact for a rather large class of weight distributions: 
\begin{theorem}\label{thm:lotsofcounters}
Let $G$ be any weight distribution satisfying either
\begin{equation}\label{cond1}
\limsup_{i\to \infty} \frac{G(x_i)}{G(x_i/c)}\, <\, \limsup_{i\to \infty} \frac{G(cx_i)}{G(x_i)}\, =\,  \infty\, ,
\end{equation}
or
\begin{equation}\label{cond2}
1 \, = \, \liminf_{i\to \infty} \frac{G(x_i)}{G(x_i/c)}\, < \, \liminf_{i\to \infty} \frac{G(cx_i)}{G(x_i)}\, ,
\end{equation}
for some constant $c>0$ and decreasing sequence $x_i:i\ge 1$ with limit $0$.  
Then there exists  a function $f: \N_0 \to \N$ which is $G$-explosive but not $G$-small.
\end{theorem}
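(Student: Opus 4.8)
The plan is to construct a suitable non-decreasing $f$ explicitly, read off non-$G$-smallness of $f$ directly from the construction, and prove $G$-explosiveness by producing, with positive probability, an infinite root-to-leaf ray of finite total weight; since explosion is a $0$--$1$ event for spherically symmetric trees with i.i.d.\ weights (Pemantle--Peres), a positive-probability statement upgrades to the almost sure one. It is convenient to work with the \emph{reachable set} process: for $t\ge 0$ let $Z_n(t)$ be the number of vertices $v$ of $(T_f)_n$ whose (unique) root path has weight at most $t$, so $Z_0\equiv 1$ and, given the first $n$ levels, $Z_{n+1}(t)=\sum_{v:\,w(v)\le t}\Bin\!\big(f(n),\,G(t-w(v))\big)$, a sum over the current reachable vertices; explosion of $T^G_f$ is precisely the event that for some finite $t$ one has $Z_n(t)\ge 1$ for all $n$.

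I would build $f$ in consecutive blocks $B_1,B_2,\dots$, with $f$ non-decreasing and tuned block-by-block to a sufficiently sparse subsequence of $(x_i)$ that I rename $(x_k)$; $B_k$ gets a branching value $d_k$, a length $\ell_k$, and an associated budget increment $\eps_k$ with $\sum_k\eps_k<\infty$. The two competing requirements are: (a) block $B_k$ contributes at least a fixed constant to $\sum_n G^{-1}(f(n)^{-1})$, so that this series diverges and $f$ is not $G$-small; and (b) from any single reachable vertex at the start of $B_k$, the number of descendants at the end of $B_k$ reachable within the extra budget $\eps_k$ is at least $2$ with probability at least $3/4$. Granting (b), the ``good'' vertices (those reachable from the root within cumulative budget $\sum_{j\le k}\eps_j$) at the ends of successive blocks dominate a Galton--Watson process with offspring at least $2$ with probability $\ge 3/4$ and independent across distinct vertices (their sub-blocks use disjoint edge sets); this process is supercritical, survives with positive probability, and on survival $T^G_f$ has an infinite ray of weight at most $\sum_k\eps_k<\infty$.

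Reconciling (a) and (b) is the heart of the matter, and it is exactly here that \eqref{cond1}/\eqref{cond2} are essential. Since by (a) the greedy one-path cost of a block is $\Omega(1)$ while by (b) the full tree must find a path through the block of weight $\eps_k\to 0$, the full tree has to beat the greedy strategy through the block by an \emph{unbounded} factor; this is precisely what Theorem~\ref{thm:main} forbids when $G$ is controlled near $0$, so one must exploit the asymmetry encoded in \eqref{cond1}/\eqref{cond2} --- $G$ is comparatively flat just below $x_k$ but rises sharply just above (unboundedly in \eqref{cond1}, by a fixed factor in \eqref{cond2}). I would make this quantitative via a first/second moment analysis of a block: the first moment of the number of block-leaves reachable within $\eps_k$ from one vertex is $d_k^{\ell_k}\,\Pr{S_{\ell_k}\le\eps_k}$ with $S_{\ell_k}$ a sum of $\ell_k$ i.i.d.\ $G$-weights, and the sharp rise of $G$ forces $\Pr{S_{\ell_k}\le\eps_k}$ to be far larger than the crude lower bound $G(\eps_k/\ell_k)^{\ell_k}$ --- this is the large-deviation/concave-envelope phenomenon that $\log G$ lies well below its concave majorant at scale $x_k$ --- large enough that the first moment is enormous even though $d_k$ is kept small enough for $\sum_n G^{-1}(f(n)^{-1})$ to diverge; one then controls the second moment, whose correlations come from pairs of block-leaves with a long common prefix, and concludes (b) by Paley--Zygmund. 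The delicate point is calibrating $(x_k,d_k,\ell_k,\eps_k)$ so that all these inequalities hold together in both regimes; case \eqref{cond2}, where the upward jump is only bounded below rather than divergent, is more fragile and will need longer blocks and an iteration along a geometric ladder of scales $c^{j}x_k$. (Alternatively one could route the explosion statement through the generalized-PWIT machinery behind Theorem~\ref{thm:PlambdaWITbis}, but the direct second-moment argument is self-contained.)

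Finally I would assemble the pieces: non-$G$-smallness from (a); $G$-explosiveness from the Galton--Watson survival estimate together with the $0$--$1$ law; monotonicity of $f$ is built into the block construction. The main obstacle, as indicated, is the simultaneous block estimate --- quantifying precisely how the branching of the tree, coupled with the sharp rise of $G$ at scale $x_k$, lets a ray traverse a macroscopic number of levels at negligible cost, and doing this uniformly enough to iterate over infinitely many scales while the greedy cost still diverges.
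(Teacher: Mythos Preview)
Your overall architecture matches the paper's: build $f$ in blocks indexed by a sparse subsequence of $(x_i)$, choose the block value so that each block contributes $\Theta(1)$ to $\sum_n G^{-1}(f(n)^{-1})$ (giving non-$G$-smallness), and then show that from any vertex at the start of a block one reaches the end within a summable weight budget with probability bounded away from zero, so that a supercritical comparison process survives with positive probability and the $0$--$1$ law finishes. Where you diverge is in the mechanism for the block-crossing estimate. You propose a first/second moment (Paley--Zygmund) argument on the full block at a single scale; the paper instead splits each block into \emph{two phases with different edge-weight thresholds}. In the first, short phase (length $n_i/2^i$) one keeps all edges of weight at most $c x_i$; since $G(cx_i)/G(x_i)$ is enormous by \eqref{cond1}, this is massively supercritical and produces of order $\exp(2^i n_i)$ descendants with high probability. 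In the second, long phase (the remaining $\approx n_i$ levels) one keeps edges of weight at most $x_i/2^i$; the flatness hypothesis $G(x_i)/G(x_i/c)\le K$ iterated gives a per-level survival probability of at least $e^{-K'i}$, so each phase-one descendant survives to generation $n_i$ with probability at least $e^{-K'in_i}$, and a plain binomial bound shows some descendant survives. This two-phase trick sidesteps second-moment bookkeeping entirely and makes the calibration explicit: total block weight is at most $c x_i\cdot n_i/2^i + (x_i/2^i)\cdot n_i = O(2^{-i})$, summable. Your uniform-block second-moment route is plausible in principle (the ``concave-envelope'' intuition is exactly right, and the first moment is indeed huge once you allow the mixed strategy of a few $\le cx_i$ steps and many $\le x_i/2^i$ steps), but controlling the second moment---where the dominant pairs share a long prefix in the rare small-step regime---is the nontrivial part you flag as ``delicate'', and the paper's thresholding device removes that difficulty. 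For \eqref{cond2} the paper does not give full details either; it simply points to the earlier explicit construction with an exponential-growth phase followed by a slightly subcritical phase, which is again a two-phase scheme rather than your ladder-of-scales idea.
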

See Section~\ref{sec:examples} for more details.

\medskip

 While Theorem~\ref{thm:main} and Theorem~\ref{thm:lotsofcounters} together cover most naturally defined distribution functions $G$, we would like to stress that it is still open to answer Question~\ref{question:PP} completely.

\subsection{Finite height criterion for stick breaking random real trees}

Consider the following method for constructing a random real tree.  
Given a sequence $\ell(i):i\in \mathbb{N}$, define the real tree $A_\ell$ 
recursively as follows.  
Let $A_{\ell}(1)$ consist of a closed segment of length $\ell(1)$ rooted at one end, and 
for each $i\ge 1$, define $A_{\ell}(i+1)$ by attaching one end of a closed segment
of length $\ell(i+1)$ to a uniformly randomly chosen point of the tree $A_{\ell}(i)$. Let
\[
A_{\ell}^o\, :=\, \bigcup_{i\ge 1}A_{\ell}(i)\, 
\]
and define $A_\ell$ as the completion of $A_\ell^o$. The random real tree $A_\ell$ is referred to as the random real tree given by the stick 
breaking process obtained by cutting the positive real line according to the segment lengths sequence 
$\ell$.

Note that in the case where the sequence $\ell(i)$ is the length of the segment $[P^\lambda(i), 
P^\lambda(i+1)]$ given by an inhomogeneous Poisson point process $P^\lambda$ with intensity 
$\lambda(t)=t$ on $\Rplus$, the random real tree $A_\ell$ is precisely the 
continuum random tree constructed by Aldous~\cite{Ald91}. 
The inhomogeneous, and more general, versions of this construction are treated 
in~\cite{Ald93, AlPi00, GH14}.  

Curien and Haas~\cite{CH} have recently studied
the geometric properties of such trees (such as compactness and Hausdorff dimension) 
in the case of deterministic lengths $\ell(i)$ which decay 
roughly like a power  $\ell(i) \approx i^{-\alpha}$ for $\alpha >0$. 
It was a question of Curien~\cite{Curien}  
that led us to consider Problem~\ref{qu1} below.

For a real tree $A$, we denote by $d(A)$ the height of $A$, i.e., the supremum of distances from  
points of $A$ to the root. We denote by $\diam(A)$ the diameter of $A$. 
Note that $d(A)\leq \diam(A) \leq 2 d(A)$. 

The following is then a very natural problem: 
\begin{problem}\label{qu1}
Classify all sequences $\ell(i), i\in \mathbb{N}$, for which
we have $d(A_{\ell})<\infty$, or equivalently, $\diam(A_\ell) < \infty$, almost surely.
\end{problem}
Note that the property of having 
bounded diameter almost surely is equivalent to the almost sure 
compactness of the random real tree $A_\ell$~\cite{CH}.

As an application of our result on linear growth of P$^\lambda$WIT, we answer this question completely in Section~\ref{sec:stick} 
for those length sequences $\ell(i)$ which are deterministic and decreasing. 

\begin{theorem}\label{thm:stick}
Let $\ell(i), i\in \mathbb{N}$, be a decreasing sequence.  
Then $d(A_{\ell})<\infty$ almost surely if and only if 
$\,\,\sum_{n\geq 1}\frac {\ell(n)}n \,<\,\infty\,, \,\textrm{or equivalently,}\,$ 
if and only if $\,\,\sum_{n\ge 1}\ell(2^n)\, <\, \infty\, .$
\end{theorem}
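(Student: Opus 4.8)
The plan is to relate the height of the stick-breaking tree $A_\ell$ to the minimum-weight-path quantity $m_n$ of an associated P$^\lambda$WIT, and then invoke Theorem~\ref{thm:PlambdaWITbis}. First I would set up the change of variables that converts the decreasing length sequence $\ell$ into an intensity function: define $\lambda$ on $\Rplus$ so that the cumulative count $\int_0^x \lambda$ is (up to rounding) the number of indices $i$ with $\ell(i)\ge $ something like $e^{-x}$, equivalently pick $\Lambda(x) := \int_0^x \lambda$ to be the inverse of $x\mapsto -\log \ell(\lfloor x\rfloor)$ or a smooth version thereof. The point is that the sticks of length roughly $e^{-x}$ attach at rate governed by how many such sticks there are, and I want the P$^\lambda$WIT whose edge-weight point processes encode ``stick of length $\ell$ contributes edge-weight $-\log \ell$'' (so that a path of small total weight corresponds to a sequence of long sticks, i.e.\ a deep point of $A_\ell$). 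The second equivalence in the theorem, $\sum \ell(n)/n < \infty \iff \sum_n \ell(2^n)<\infty$, is an elementary Cauchy-condensation-type computation using monotonicity of $\ell$, and I would dispatch it separately at the start or end.

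The heart of the argument is a two-sided comparison between $d(A_\ell)$ and explosion of $T^\lambda$. For the ``structural'' reduction I would recall (from the construction of $A_\ell$ and the standard fact, used already for the CRT) that uniformly attaching sticks to the growing tree is equivalent, after recording which stick attaches to which, to a certain recursive tree structure; the key observation is that the first $k$ sticks, ordered by length, attach to earlier sticks according to a rule that stochastically dominates, and is dominated by, the branching structure of a P$^\lambda$WIT with the intensity $\lambda$ chosen above. Concretely, a point of $A_\ell$ at distance $\approx t$ from the root corresponds to an infinite ray in $T^\lambda$ along which the partial sums of edge-weights stay bounded by $t$; hence $d(A_\ell) < \infty$ almost surely precisely when $T^\lambda$ is explosive, i.e.\ when $\lim_n m_n(T^\lambda) < \infty$ almost surely, and $d(A_\ell)=\infty$ a.s.\ otherwise (a $0$–$1$ law, which follows from the tail-triviality/Kolmogorov-type argument or directly from Theorem~\ref{thm:PlambdaWITbis}).

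Given that reduction, the theorem follows by unwinding the dichotomy of Theorem~\ref{thm:PlambdaWITbis} in terms of $\ell$. Linear growth of $m_n(T^\lambda)$ is the obstruction to explosion: if $T^\lambda$ has linear growth a.s.\ then $m_n\to\infty$ and $A_\ell$ has infinite height, while if the linear-growth dichotomy lands in case (ii) one must still check that non-linear growth actually yields $\lim_n m_n<\infty$ here — this is where the monotonicity of $\ell$ (equivalently, a monotonicity/regularity property of $\lambda$) and a direct greedy-path or Borel–Cantelli argument is needed to upgrade ``not linear'' to ``explosive''. Translating the criterion ``there is no $C$ with $\int_0^x\lambda \le C^x$'' through the definition of $\lambda$ gives exactly $\sum_n \ell(2^n) = \infty$: the condition $\int_0^x \lambda \le C^x$ says the number of sticks of length $\ge e^{-x}$ grows at most exponentially in $x$, i.e.\ (substituting $x = n\log 2$) the number of sticks of length $\ge 2^{-n}$ is $O(2^{cn})$ for some $c$, which after summation by parts is equivalent to $\sum_n \ell(2^n)<\infty$.

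I expect the main obstacle to be making the comparison between the stick-breaking attachment process and the P$^\lambda$WIT precise and bidirectional: the uniform attachment point lands on the current finite tree (total length $\sum_{j\le i}\ell(j)$), not on an idealized infinite object, so one needs careful coupling — likely splitting the tree into ``scales'' of stick length, showing that within each scale the attachment counts are (conditionally) Poisson-like and essentially independent across scales, and controlling the discrepancy caused by a new stick attaching in the interior of an existing stick rather than at a vertex. Handling this interior-attachment subtlety, and the rounding between the discrete index $i$ and the continuous variable $x$, is the delicate part; once the coupling is in place, the rest is bookkeeping plus a citation of Theorem~\ref{thm:PlambdaWITbis} and the elementary condensation lemma.
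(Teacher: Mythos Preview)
Your reduction contains a concrete error: the claimed equivalence between ``$\int_0^x \lambda \le C^x$ for some $C$'' and ``$\sum_n \ell(2^n) < \infty$'' is false. With your $\Lambda(x) = |\{i : \ell(i) \ge e^{-x}\}|$, take $\ell(n) = 1/(\log_2 n)^2$ for $n \ge 2$: then $\sum_n \ell(2^n) = \sum_n n^{-2} < \infty$, yet $\Lambda(x) \approx 2^{e^{x/2}}$, which is not bounded by any $C^x$. So the dichotomy of Theorem~\ref{thm:PlambdaWITbis}, applied to your $\lambda$, does not separate the two cases of the theorem. There is also a mismatch at the level of the encoding itself: with edge weight $-\log \ell(i)$, a ray of bounded total weight would force $\prod_k \ell(i_k)$ to be bounded away from $0$, which is impossible once $\ell(i)\to 0$; so your $T^\lambda$ is \emph{never} explosive, and the asserted correspondence ``$d(A_\ell)<\infty$ iff $T^\lambda$ is explosive'' cannot hold.

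The paper's argument looks superficially similar but is structured differently. It weights the genealogy tree of $A_\ell$ by $\log(i/j)$ (child index over parent index), so a path from the root to its $n$th vertex has total weight $\log i_n$. Because $\ell$ is decreasing, segment $e_j$ attaches to $e_i$ with probability at most $1/i$, and hence the number of children of $v_i$ of weight at most $w$ is stochastically dominated by $\Po\bigl(\int_0^w C^t\,dt\bigr)$ for a \emph{fixed} $C$ independent of $\ell$. Theorem~\ref{thm:PlambdaWITbis} is therefore invoked only in case~(i), uniformly, giving $\log i_n \ge \delta n$ along every ray regardless of whether $\sum_n \ell(2^n)$ converges. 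The summability hypothesis enters only afterwards, via monotonicity of $\ell$, to bound the actual path length $\sum_k \ell(i_k)$ by a series comparable to $\sum_k \ell(\lfloor e^{\delta k}\rfloor)$. For the direction $\sum_n \ell(2^n)=\infty \Rightarrow d(A_\ell)=\infty$, no P$^\lambda$WIT comparison is used: one first passes (Lemma~\ref{lem:D}) to a set $\mathfrak{D}$ of dyadic scales on which $\ell(2^n)$ is not much smaller than earlier values, and then builds a greedy ray that hits each dyadic block $(2^{n-1},2^n]$, $n\in\mathfrak{D}$, with probability bounded below. Your proposed ``upgrade non-linear to explosive'' step is not the right mechanism here, since the genealogy tree has linear growth in both cases.
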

 
We note that the requirement that 
$\ell(i)$ be decreasing may be relaxed: writing $\bar{\ell}(i)$ for the average value of 
$\ell(j):j\le i$, we only need to assume that the ratio 
  $\ell(i)/\bar{\ell}(i)$ is bounded. 
  
   It might be possible that a  similar criterion (applied to the decreasing rearrangement of $\ell$) remains valid for a general sequence 
  $\ell(i)$, a question we leave open. 
 
 Finally, we mention that Curien and Haas  have independently and simultaneously 
 proved in~\cite{CH}, by using different tools, that 
  if $\ell(i) \leq i^{-\alpha+o(1)}$ for some $\alpha>0$, 
  then the random real tree $A_\ell$ has bounded height almost surely. 
  They further prove (amongst other things) that under the  
  additional assumption on the average $\bar\ell(i) = i^{-\alpha+0(1)}$ for $\alpha\in(0,1]$,  
  the Hausdorff dimension of $A_\ell$ is $\alpha^{-1}$, 
  while for $\alpha>1$, the Hausdorff dimension is one, almost surely.

\section{Linear growth in P$^{\lambda}$WIT trees}\label{sec:PlambdaWIT}
In this section, we will prove Theorem~\ref{thm:PlambdaWITbis}.

Hammersley~\cite{Ham74}, Kingman~\cite{Kin75}, and Biggins~\cite{Big76} give 
general conditions under which linear growth occurs in a branching random walk, as well as 
associated limit theorems. The version most suitable to our situation is Kingman~\cite{Kin75} 
(see also Biggins~\cite{Big77}). Kingman's theorem holds for general point processes on $\Rplus$, 
but we state it here only for the case relevant to us, that of inhomogeneous Poisson point 
processes.
\begin{theorem}[Kingman~\cite{Kin75}, specialized to the P$^\lambda$WIT]\label{thm:genlingrowth}     Let $T^\lambda$ be a P$^\lambda$WIT, for some appropriate $\lambda: \Rplus \to \Rplus$.    Define $\mu: \Rplus \to \Rplus \cup \{\infty\}$ by
    \begin{equation}\label{eq:mu} \mu(a) := \inf_{D \geq 0} \left\{ e^{D a} \Ex{\sum_{j \geq 1} e^{-D P^\lambda(j)}}\right\}. 
    \end{equation}
    Assume $\mu(a) < \infty$ for some $a > 0$.
    Then 
    \[ \lim_{n \to \infty} m_n(T^\lambda) / n \to \alpha \,\,\,\text{almost surely,}  \]
    where $\alpha = \inf \{ a : \mu(a) > 1 \}$.
    In particular, if $\alpha > 0$, then $T^\lambda$ exhibits linear growth almost surely.
    Moreover, 
    \[ \Pr{m_n(T^\lambda) \leq an} \leq \mu(a)^n. \]
\end{theorem}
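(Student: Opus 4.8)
The plan is to derive everything from a first-moment (union bound) computation combined with the subadditive structure of $m_n$. First I would prove the upper tail bound $\Pr{m_n(T^\lambda) \leq an} \leq \mu(a)^n$, which is the engine. Fix $D \geq 0$. For a vertex $v$ at generation $n$, let $W(v)$ denote the weight of the unique path from the root to $v$; then $m_n \leq an$ iff some generation-$n$ vertex has $W(v) \leq an$. By Markov's inequality applied to $e^{-D W(v)}$ and a union bound over the (random) set $T_n$ of generation-$n$ vertices,
\[
  \Pr{m_n(T^\lambda) \leq an} \;=\; \Pr{\exists v \in T_n : W(v) \leq an} \;\leq\; e^{Dan}\,\Ex{\sum_{v \in T_n} e^{-D W(v)}}.
\]
The key step is to evaluate $\Ex{\sum_{v \in T_n} e^{-D W(v)}}$. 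Because the point processes $P^\lambda_{\pwitv i}$ attached to distinct vertices are independent, and the weight of a path factors as a product $e^{-DW(v)} = \prod e^{-D(\text{edge weight})}$ over its $n$ edges, a generation-by-generation conditioning (or an induction on $n$, peeling off the last generation) gives the clean multiplicative identity
\[
  \Ex[\Big]{\sum_{v \in T_n} e^{-D W(v)}} \;=\; \left(\Ex[\Big]{\sum_{j \geq 1} e^{-D P^\lambda(j)}}\right)^{\!n}.
\]
Substituting and then optimizing over $D \geq 0$ yields $\Pr{m_n \leq an} \leq \mu(a)^n$. I should note $\Ex{\sum_j e^{-DP^\lambda(j)}}$ may be infinite for small $D$, but the hypothesis $\mu(a) < \infty$ for some $a>0$ guarantees the infimum in \eqref{eq:mu} is attained (or approached) at $D$ where this expectation is finite, so the bound is non-trivial there.

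Next I would establish the almost-sure limit $m_n/n \to \alpha$ with $\alpha = \inf\{a : \mu(a) > 1\}$. The upper bound $\limsup m_n/n \leq \alpha$ a.s.\ follows from Borel--Cantelli: for any $a > \alpha$ we have $\mu(a) < 1$ (using that $a \mapsto \mu(a)$ is non-decreasing, being an infimum of non-decreasing functions $e^{Da}\cdot(\text{const})$), hence $\sum_n \Pr{m_n \leq an} \leq \sum_n \mu(a)^n < \infty$, so a.s.\ $m_n > an$ for only finitely many $n$ — wait, we want the reverse: $\Pr{m_n \leq an} \to 0$ summably means a.s.\ eventually $m_n > an$ is false infinitely often only finitely; more carefully, for $a < \alpha$ close to $\alpha$, $\mu(a) < 1$ gives $m_n \leq an$ only finitely often, so $\liminf m_n/n \geq \alpha$... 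I need to be careful about the direction. The correct statement: for $a < \alpha$, $\mu(a) \leq 1$; the summable tail bound applies when $\mu(a) < 1$, giving $\liminf m_n / n \geq a$ a.s.\ for all such $a$, hence $\liminf m_n/n \geq \alpha$. The matching bound $\limsup m_n/n \leq \alpha$ is the harder half and is where the genuine branching-random-walk argument enters: one uses a second-moment or a direct "many disjoint subtrees" argument to show that for $a > \alpha$ (where $\mu(a) > 1$, so the branching process of $e^{-D\cdot}$-weighted particles is supercritical in the relevant sense), with probability bounded away from $0$ there is a generation-$n$ vertex of weight $\leq an$, and then upgrade to a.s.\ via the tree's self-similar recursive structure (restarting at generation-$k$ descendants and using independence across an infinite supply of subtrees, à la a $0$–$1$ law).

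I expect the main obstacle to be precisely this last lower-bound-on-speed direction ($\limsup m_n/n \leq \alpha$): the first-moment bound alone never shows paths of small weight \emph{exist}, only that they are not too abundant. Since the theorem is quoted from Kingman~\cite{Kin75} and Biggins~\cite{Big77}, the cleanest route in the paper is to cite those references for the a.s.\ convergence and the identification of $\alpha$, and to include only the self-contained first-moment proof of the tail bound $\Pr{m_n \leq an} \leq \mu(a)^n$ together with the easy Borel--Cantelli half, since that tail bound is what the applications (Theorem~\ref{thm:PlambdaWITbis}(i), with its required exponential estimate $\Pr{m_n < \delta n} \leq e^{-Kn}$) actually need. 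Concretely: one chooses $\delta$ small enough that $\mu(\delta) \leq e^{-K}$, which is possible whenever $\mu(0) = \inf_{D\geq 0}\Ex{\sum_j e^{-DP^\lambda(j)}} < \infty$ — and verifying that this finiteness holds exactly under the hypotheses of Theorem~\ref{thm:PlambdaWITbis}(i) (either $\int_0^t \lambda = 0$, forcing the smallest particle $P^\lambda(1) \geq t$, or $\int_0^x \lambda \leq C^x$, which controls $\Ex{\sum_j e^{-DP^\lambda(j)}}$ for $D$ slightly larger than $\log C$) is the routine computation I would defer to the body of Section~\ref{sec:PlambdaWIT}.
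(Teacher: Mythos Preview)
The paper does not prove this theorem at all: it is stated as a specialization of a known result of Kingman~\cite{Kin75} (with a pointer also to Biggins~\cite{Big77}) and is used as a black box in the proof of Theorem~\ref{thm:PlambdaWITbis}(i). There is no argument given in the paper beyond the citation.

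Your sketch therefore goes strictly beyond what the paper does. The first-moment computation you give for the tail bound $\Pr{m_n \leq an} \leq \mu(a)^n$ is correct and standard: the multiplicative identity $\Ex{\sum_{v \in T_n} e^{-DW(v)}} = \bigl(\Ex{\sum_j e^{-DP^\lambda(j)}}\bigr)^n$ follows exactly as you say by peeling off generations and using independence, and then one optimizes over $D$. Your Borel--Cantelli deduction of $\liminf m_n/n \geq \alpha$ is also fine once you sort out the direction (which you do), modulo the small technicality that one needs $\mu(a) < 1$ strictly for $a < \alpha$, not merely $\mu(a) \leq 1$; this comes from the fact that for any $D>0$ achieving a finite value in~\eqref{eq:mu}, $a \mapsto e^{Da}\Ex{\sum_j e^{-DP^\lambda(j)}}$ is strictly increasing, so $\mu$ cannot be constant on a nondegenerate interval unless the infimum is only attained at $D=0$. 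You correctly identify the direction $\limsup m_n/n \leq \alpha$ as the substantive half requiring the actual branching-random-walk machinery (supercriticality plus a self-similarity/zero--one argument), and your suggestion to cite Kingman and Biggins for it is exactly what the paper does for the entire theorem. Your closing paragraph about how the tail bound feeds into Theorem~\ref{thm:PlambdaWITbis}(i) via choosing $\delta$ with $\mu(\delta) \leq e^{-K}$ matches the paper's actual use of the result.
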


We begin with part (i) of Theorem~\ref{thm:PlambdaWITbis}. 
The following argument allows to treat only the case where 
$\int_0^t \lambda>0$ for any $t>0$. Indeed, 
if $\int_0^c \lambda = 0$ for some $c > 0$, then $\Pr{m_n(T^\lambda) < c n} = 0$ and 
linear growth holds trivially. In addition, if $c$ is chosen maximum with respect to 
$\int_0^c\lambda=0$, then 
the intensity function $\tilde \lambda$ defined by $\tilde \lambda(t) := \lambda(t+c)$ 
for any $t\in \mathbb R^+$ verifies $\int_0^t\tilde \lambda >0$ 
for all $t>0$, so that the limit assertion for $\lambda$ follows from that of $\tilde\lambda$, 
guaranteed either by part (i) or part (ii) of the theorem.

So suppose that $\int_0^t \lambda>0$ for any $t>0$. Let $C>0$ so that $\int_0^x \lambda \leq C^x$ for all $x \geq 1$.
The key lemma is the following, after which we will be able to directly apply Kingman's result. \begin{lemma}\label{lem:onelevel} 
For every $\eta>0$, there exists $D\in \R$ such that
\[
\Ex{\sum_{j\ge 1}e^{-DP^{\lambda}(j)}}\, <\, \eta\, .
\]
\end{lemma}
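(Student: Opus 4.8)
The plan is to reduce the statement to a deterministic integral estimate. Since $P^\lambda$ is a Poisson point process with intensity measure $\lambda(x)\,dx$ on $\Rplus$, the standard formula for the expectation of an additive functional of a Poisson process gives, for every $D>0$,
\[
\Ex{\sum_{j\ge 1}e^{-DP^\lambda(j)}}\;=\;\int_0^\infty e^{-Dx}\,\lambda(x)\,dx,
\]
so it suffices to show that the right-hand side can be pushed below $\eta$ by taking $D$ large enough. Write $\Lambda(x):=\int_0^x\lambda$ for the cumulative intensity, and recall that $\Lambda(x)<\infty$ for every finite $x$ (local integrability of $\lambda$), that $\Lambda(x)\le C^x$ for all $x\ge 1$ (our standing hypothesis), and — crucially for the argument near the origin — that $\Lambda(x)\to 0$ as $x\downarrow 0$.

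The essential point is the order of the quantifiers: I would first fix a cutoff $\eps>0$ depending only on $\eta$, and only afterwards send $D\to\infty$. Split the integral at $\eps$. On $[0,\eps]$, bounding $e^{-Dx}\le 1$ gives $\int_0^\eps e^{-Dx}\lambda\le\Lambda(\eps)$, uniformly in $D>0$, so choose $\eps$ small enough that $\Lambda(\eps)<\eta/2$. For the tail, integrate by parts:
\[
\int_\eps^\infty e^{-Dx}\,d\Lambda(x)\;=\;\Big[e^{-Dx}\Lambda(x)\Big]_\eps^\infty+D\int_\eps^\infty e^{-Dx}\Lambda(x)\,dx\;\le\;D\int_\eps^\infty e^{-Dx}\Lambda(x)\,dx,
\]
where for $D>\log C$ the boundary term at infinity vanishes, since $e^{-Dx}\Lambda(x)\le e^{-(D-\log C)x}\to 0$. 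Splitting the remaining integral at $1$ and using $\Lambda\le\Lambda(1)$ on $[\eps,1]$ together with $\Lambda(x)\le C^x$ on $[1,\infty)$ bounds it by roughly $\Lambda(1)\,e^{-D\eps}+\tfrac{D}{D-\log C}\,e^{-(D-\log C)}$; both terms tend to $0$ as $D\to\infty$ with $\eps$ held fixed. Hence for $D$ large enough the tail is below $\eta/2$, and summing the two pieces completes the proof.

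The only mildly delicate feature is the behaviour near the origin, where the factor $e^{-Dx}$ provides no decay, so the required smallness there has to come entirely from $\Lambda(\eps)$ being small — which is exactly why $\eps$ must be chosen before $D$ is let grow. Everything else is a routine integration-by-parts estimate powered by the sub-exponential growth bound $\Lambda(x)\le C^x$, and I do not anticipate any genuine obstacle.
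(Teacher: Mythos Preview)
Your proof is correct, and it takes a genuinely different route from the paper's. You go straight to Campbell's formula
\[
\Ex{\sum_{j\ge 1}e^{-DP^\lambda(j)}}=\int_0^\infty e^{-Dx}\lambda(x)\,dx,
\]
and then treat the resulting deterministic integral by splitting at a small $\eps$ (using $\Lambda(\eps)\to 0$) and integrating by parts on the tail (using $\Lambda(x)\le C^x$). The paper instead first reduces, via a monotone coupling, to a canonical ``worst-case'' intensity that is decreasing on $[0,1]$ and equals $(\log C)C^y$ on $[1,\infty)$; it then handles $[0,1]$ probabilistically, exploiting that for a decreasing intensity the increments $P^\lambda(j)-P^\lambda(j-1)$ stochastically dominate $P^\lambda(1)$ to obtain $\Ex{e^{-DP^\lambda(j)}}\le(\Ex{e^{-DP^\lambda(1)}})^j$, and only invokes the integral representation on $(1,\infty)$. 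Your argument is shorter and more elementary: it avoids the coupling step and the stochastic-domination trick entirely, and makes transparent that the only ingredients needed are local integrability of $\lambda$ at $0$ and the growth bound $\Lambda(x)\le C^x$ at infinity. The paper's approach, on the other hand, keeps the probabilistic structure of the point process visible a little longer, which is perhaps more in the spirit of the surrounding arguments; but for this particular lemma your deterministic reduction is the cleaner proof.
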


\begin{proof} Note that, by a straightforward coupling, if $\lambda_1$ and $\lambda_2$ are such that $\int_0^x \lambda_1\ge \int_0^x \lambda_2$ for all $x\ge 0$, then
\[
\Ex{\sum_{j\ge 1}e^{-DP^{\lambda_1}(j)}}\, \le \, \Ex{\sum_{j\ge 1}e^{-DP^{\lambda_2}(j)}}\, .
\]
This allows us to assume $\lambda$ is decreasing on $[0,1]$, and that $\lambda(y)=(\log{C})C^y$ for $y\ge 1$.  
In particular, if we prove the lemma for such functions $\lambda$ then this proves the lemma in general. 
In addition, we may obviously 
assume that $\eta \leq 1$.

We shall deal separately with points in the interval $[0,1]$ and those in $(1,\infty)$.  We give a choice of $D$ such that
\[
\Ex{\sum_{j\ge 1:P^{\lambda}(j)\le 1}e^{-DP^{\lambda}(j)}}\, \le \, \frac{\eta}{2}\qquad \text{and}\qquad \Ex{\sum_{j\ge 1:P^{\lambda}(j)> 1}e^{-DP^{\lambda}(j)}}\, \le \, \frac{\eta}{2} \, .
\]
Let $D_1$ be large enough that
\[
\Ex{e^{-D_1P^{\lambda}(1)}}\, \le \frac{\eta}{4}\, .
\]
For each $j\ge 2$, the fact that $\lambda$ is decreasing on $[0,1]$ implies that conditioned on $P^{\lambda}(j)\leq 1$, the 
increment $P^{\lambda}(j)-P^{\lambda}(j-1)$ stochastically dominates $P^{\lambda}(1)$, and so
\[
\Ex{e^{-D_1P^{\lambda}(j)}}\, \le \, 
\left(\Ex{e^{-D_1P^{\lambda}(1)}}\right)^j \, \le \,
\left(\frac{\eta}{4}\right)^j\, .
\]
Summing the above inequalities, we obtain that for any $D\ge D_1$,
\[
\Ex{\sum_{j\ge 1:P^{\lambda}(j)\le 1}e^{-D P^{\lambda}(j)}}\, \le \, \frac{\eta}{2}\, .
\]
Now, for the points in $(1,\infty)$, we have
\[
\Ex{\sum_{j\ge 1:P^{\lambda}(j)> 1}e^{-DP^{\lambda}(j)}}\, \le \, 
\int _{1}^{\infty} (\log{C})C^y e^{-Dy}\, dy\, \le \frac{C\log{C}}{D-\log{C}}\,\,e^{-D}\,  .
\]
Therefore, taking $D\ge D_1$ large enough so that $C\log{C}e^{-D}/(D-\log{C})\le \eta/2$, we have
\[
\Ex{\sum_{j\ge 1:P^{\lambda}(j)> 1}e^{-DP^{\lambda}(j)}}\, \le \, \eta/2\, ,
\]
which completes the proof of the lemma.
\end{proof}

Note that the lemma ensures that $\mu(a)<\infty$ for any $a \in \Rplus$.
Thus by Theorem~\ref{thm:genlingrowth}, $m_n(T^\lambda) / n \to \alpha$ as $n \to \infty$, where $\alpha := \inf\{a : \mu(a) > 1\}$. 
The lemma also immediately implies that $\mu(a) \to 0$ as $a \to 0$.
So $\alpha > 0$, and thus $T^\lambda$ exhibits linear growth almost surely.
Moreover, again by Theorem~\ref{thm:genlingrowth}, 
\[ \Pr{m_n(T^\lambda) \leq \delta n} \leq \mu(\delta)^n \leq e^{-Kn} \]
for any $\delta>0$ s.t.\ $\mu(\delta) \leq e^{-K}$.
This completes the proof of part~(i) of Theorem~\ref{thm:PlambdaWITbis}.

\medskip

We now move to part (ii).  So suppose that for any $t>0$, we have $\int_0^t \lambda >0$, and 
$\lambda$ does not satisfy the conditions of part~(i): in other words, 
for any $C > 0$, there exists an $x \geq 1$ so that $\int_0^x \lambda > C^x$. It is not 
possible to directly apply the result of Kingman, since it turns out that $\mu(a) = \infty$ for all $a$.
A truncation argument may be used, but we prefer to give here a direct proof.

We will prove that $T^{\lambda}$ a.s. does not have linear growth by finding a value $n_\delta$ for every $\delta > 0$, 
with $n_\delta \to \infty$ as $\delta \to 0$, such that
\[
\Pr{m_{n_\delta}(T^{\lambda}) \geq \delta \cdot n_\delta}\, \to \, 0 \quad \text{as} \quad \delta \to 0.
\]

So fix any $\delta > 0$.
Let $p=p_\delta := \min \{\Pr{P^\lambda(1) < \delta/2}, \delta/2\}$. 
Note that $p > 0$ by the assumption that $\int_0^t \lambda > 0$ for all $t > 0$. 
Also let $C =C_\delta := p^{-8p^{-1}}$, and choose $x =x_\delta \geq 1$ so that 
$\int_0^x \lambda > C^x$. Let
$n = n_\delta := \lceil 2x / \delta \rceil$. Note that $n_\delta 
\to \infty$ as $\delta \to 0$.

Let $\mathfrak T $ denote the connected component containing the root of the subforest of 
$T^{\lambda}$ obtained by keeping those edges adjacent to the root with weight less than $x$ and all remaining edges having weight less than $\delta/2$.
We thus have 
\begin{align*}
    \Pr{m_n(T^{\lambda}) \geq \delta n)} &\leq \Pr{\mathfrak T_n = \emptyset}\\
    &\leq \Pr{\mathfrak T_n = \emptyset \mid\,|\mathfrak T_1| \geq p^{-2n}} + \Pr{|\mathfrak T_1| < p^{-2n}}.
\end{align*}
Given that a particular child $v$ of the root is in $\mathfrak T$, the probability that $v$ has a descendant
in $\mathfrak{T}_n$ (generation $n$) is certainly at least $p^{n-1}$. Therefore, 
$$\Pr{\mathfrak T_n = \emptyset \mid\,|\mathfrak T_1| 
\geq p^{-2n}} \leq \Pr{\Bin(\lceil p^{-2n} \rceil, p^{n-1}) = 0}.$$
Also $|\mathfrak T_1|$ is Poisson distributed, with mean 
\[ \int_0^{x} \lambda \geq C^x = p^{-8p^{-1}x} \geq p^{-2p^{-1}n\delta} \geq p^{-4n}. \]
So clearly the above bound on $\Pr{m_n(T^\lambda \geq \delta n)}$ goes to zero as $\delta \to 0$, and the result is proved.

\section{Explosion in spherically symmetric trees}\label{sec:explosion}

In this section we prove Theorem~\ref{thm:main}.  That is, 
we show that if $G$ is controlled near $0$, and $f:\N_0\to \N$ is \nondecreasing{}, then 
\[
f \text{ is $G$-small}\, \Leftrightarrow \, f \text{ is $G$-explosive}\, .
\]
One direction of the equivalence is straightforward.  If $G$ is controlled near $0$ and $f$ is $G$-small,
then one easily deduces that
\[
\sum_{n\ge 1}G^{-1}\left(\frac{1+\eps}{f(n)}\right)\, <\, \infty \, ,
\]
for some $\eps>0$.  
The first direction of the equivalence then follows from the following proposition.

\begin{proposition}\label{prop:onepluseps}
Let $G$ be an arbitrary weight distribution function, and let $f:\N_0\to \N$ be \nondecreasing{}.  If
\[
\sum_{n\ge 1}G^{-1}\left(\frac{1+\eps}{f(n)}\right)\, <\, \infty \, ,
\]
then $f$ is $G$-explosive.
\end{proposition}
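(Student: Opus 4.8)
The plan is to implement the greedy path construction sketched in the introduction and show that it almost surely produces an infinite path of finite total weight. Start at the root $v_0 = \emptyset$, and having chosen $v_n$ at generation $n$, let $v_{n+1}$ be the endpoint of a minimum-weight edge among the $f(n)$ edges joining $v_n$ to its children; write $W_n$ for that minimum weight. By construction the $W_n$ are independent, and $W_n$ is distributed as the minimum of $f(n)$ i.i.d.\ samples from $G$; in particular
\[
\Pr{W_n > t}\ =\ \bigl(1 - G(t)\bigr)^{f(n)}.
\]
The path $v_0 v_1 v_2 \cdots$ witnesses explosion provided $\sum_{n\ge 1} W_n < \infty$ almost surely, so it suffices to show $\sum_n \Ex{\min\{W_n,1\}} < \infty$, or more simply to find deterministic levels $t_n$ with $\sum_n t_n < \infty$ and $\sum_n \Pr{W_n > t_n} < \infty$; the Borel--Cantelli lemma then gives $W_n \le t_n$ eventually, hence $\sum_n W_n < \infty$ a.s.

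The natural choice is $t_n := G^{-1}\bigl((1+\eps)/f(n)\bigr)$, which is summable by hypothesis. With this choice, at every $n$ for which $G$ is continuous enough that $G(t_n) \ge (1+\eps)/f(n)$ we get
\[
\Pr{W_n > t_n}\ =\ \bigl(1 - G(t_n)\bigr)^{f(n)}\ \le\ \Bigl(1 - \tfrac{1+\eps}{f(n)}\Bigr)^{f(n)}\ \le\ e^{-(1+\eps)},
\]
which is bounded by a constant strictly less than $1$ but is \emph{not} summable on its own. The point of the extra factor $1+\eps$ is therefore more subtle: I would instead use it to trade a small constant factor in the threshold for an exponential gain. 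Concretely, since $f$ is non-decreasing and $\sum_n t_n < \infty$, the values $f(n)$ must tend to infinity (otherwise $t_n$ does not tend to $0$ and the sum diverges — here one uses $\int_0^\infty\lambda=\infty$-type reasoning, i.e.\ $G(0+)=0$, or simply that $G^{-1}(c)>0$ for fixed $c$). Hence for large $n$, $(1-(1+\eps)/f(n))^{f(n)} \le e^{-(1+\eps/2)}$ say; more usefully, group the levels dyadically. Partition $\N$ into blocks $B_k$ on which $f$ is roughly constant, of size $\asymp f$ on that block; on a block where $f \approx m$, summability of $t_n = G^{-1}((1+\eps)/m)$ forces the block to be short relative to $m$, and the probability that the greedy edge ever exceeds its threshold on that block is at most $|B_k|\, e^{-(1+\eps)} \lesssim$ (block length)/$m$, which sums. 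Thus Borel--Cantelli applies to the blocks and $\sum_n W_n<\infty$ a.s.

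The main obstacle, and the only place real care is needed, is the interaction between the quantile $G^{-1}$ and possible flat pieces or jumps of $G$: one must pin down the convention for $G^{-1}$ (say $G^{-1}(u) = \inf\{t : G(t) \ge u\}$) and verify the inequality $G\bigl(G^{-1}(u)\bigr) \ge u$ used above, together with the fact that a strict gap of size $\eps/f(n)$ in the argument leaves genuine room even at an atom of $G$. This is exactly why the hypothesis carries the spare factor $1+\eps$ rather than just $1$: it converts "$W_n \le t_n$ with probability bounded away from $1$" into "$W_n \le t_n$ summably often" without any continuity assumption on $G$. Once that bookkeeping is done, the greedy path together with Borel--Cantelli finishes the proof, and no appeal to Theorem~\ref{thm:PlambdaWITbis} or to spherical symmetry beyond the definition of $T_f$ is needed.
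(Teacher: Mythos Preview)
Your greedy-path approach has a genuine gap, and in fact cannot be repaired: the single greedy path need not have finite weight under the hypothesis. The block argument asserts $|B_k|\, e^{-(1+\eps)} \lesssim |B_k|/m$, but $e^{-(1+\eps)}$ is a fixed positive constant while $m\to\infty$, so this is simply false; nor does summability of the $t_n$ force a block to be short relative to $m$, since $G^{-1}((1+\eps)/m)$ may be arbitrarily small compared with $1/m$. To see that the greedy path can genuinely diverge, take $G$ supported on $\{2^{-k^2}:k\ge 1\}\cup\{1\}$ with $G(2^{-k^2})=2^{-k}$, and let $f$ take the value $m_k\approx (1+\eps)2^k$ on a block $I_k$ of length $L_k=2^{k^2-k}$. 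Then $t_n=2^{-k^2}$ on $I_k$ and $\sum_n t_n=\sum_k L_k\,2^{-k^2}=\sum_k 2^{-k}<\infty$, so the hypothesis holds. But on block $I_k$ one has $\Pr{W_n\ge 2^{-(k-1)^2}}=(1-2^{-k})^{m_k}\approx e^{-(1+\eps)}$, so by the law of large numbers the greedy sum over $I_k$ is of order $L_k\,e^{-(1+\eps)}\cdot 2^{-(k-1)^2}\asymp 2^{k}$, and $\sum_n W_n=\infty$ almost surely.

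The paper's proof exploits the full branching structure rather than committing to a single path. One keeps \emph{all} edges from generation $n$ to $n+1$ of weight at most $t_n=G^{-1}((1+\eps)/f(n))$; each vertex then has, stochastically, at least $\Bin\bigl(f(n),(1+\eps)/f(n)\bigr)$ surviving children, and since $f(n)\to\infty$ this dominates a fixed supercritical offspring distribution for all large $n$. The resulting forest therefore contains an infinite path with positive probability, and any such path has total weight at most $\sum_n t_n<\infty$. The role of the extra $\eps$ is thus not to make a Borel--Cantelli sum converge along one path, but to push the mean number of cheap children strictly above~$1$ so that the branching process survives.
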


\begin{proof}  Note that the summability condition certainly implies that $f(n)\to \infty$. 
Consider the forest $F$ in which edges from generation $n$ to generation $n+1$ are kept if 
their weight is at most $G^{-1}((1+\eps)/f(n))$.  
Since any path in $F$ has finite weight it suffices to show it contains an 
infinite path with positive probability.  
Each node of generation $n$ has $\Bin(f(n),(1+\eps)/f(n))$ children, 
which stochastically dominates the distribution $\max\{\Po(1+\eps/2),\eps^{-1}\}$ when $n$ is large.  
Since the Galton-Watson branching process with offspring distribution 
$\max\{\Po(1+\eps/2),\eps^{-1}\}$ survives with positive probability, the same is true for $F$.
\end{proof}

We now turn to the remaining direction of the equivalence.  We prove that if $G$ is controlled near $0$ and $f$ is not $G$-small, i.e., 
\[
\sum_{n\ge 1}G^{-1}(f(n)^{-1})\, =\, \infty\, ,
\]
then $f$ is not $G$-explosive.

The idea is to compare weights along paths in $T_f^G$ with the terms of the sequence $a(n):=G^{-1}(f(n)^{-1})$.  Indeed, the key intermediate result will be Proposition~\ref{hatlinear}, which claims that this re-normalized weighted tree has linear growth (at least after the removal of some extra heavy edges).

\begin{definition}
Given the weighted infinite tree $T_f^G$, and a sequence $(a(n))$, 
define the renormalized weighted tree $\hat{T}_f^G$ to have the same underlying graph as $T_f^G$, but with weights
\[
\hat{w}_e\, :=\, \frac{w_e}{a(|e|)}\qquad e\in E(\hat{T}_f^G) = E(T_f^G)\, ,
\]
where $w_e$ is the weight of $e$ in $T_f^G$ and $|e|$ is the generation of the parent in the edge $e$.
\end{definition}

Say that $\hat{T}_f^G$ has been $\eta$-trimmed, if all edges $e$ with
\[
\hat{w}_e\, \ge\, \frac{\eta}{a(|e|)}\, 
\]
have been removed.  Note, this is equivalent to removing all edges of weight at least $\eta$ in $T_f^G$ before renormalizing. 

\begin{proposition}\label{hatlinear}
Let $G$ be controlled near $0$ and let $f$ be a \nondecreasing{} function with $f(n)\to \infty$ as $n\to \infty$.  Then there exists $\eta>0$ such that the tree $\hat{T}$ obtained when $\hat{T}_f^G$ is $\eta$-trimmed exhibits linear-growth almost surely.
\end{proposition}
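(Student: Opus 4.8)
The plan is to reduce the statement to part~(i) of Theorem~\ref{thm:PlambdaWITbis} by exhibiting, after a suitable choice of the trimming level $\eta$, a P$^{\lambda}$WIT $T^{\lambda}$ with $\int_0^x \lambda \le C^x$ which, level by level, stochastically dominates the renormalized trimmed tree $\hat{T}$ in a weight‑increasing sense. Write $a(n) := G^{-1}(f(n)^{-1})$, so that an edge of $\hat{T}_f^G$ leaving generation $n$ has renormalized weight with distribution function $x \mapsto G(a(n)x)$. Since $G$ is controlled near $0$, fix $c>1$, $M'<\infty$ and $\beta>0$ with $1+\beta \le G(cx)/G(x) \le M'$ for all $0<x\le x_0$; note $G(0)=0$ (this is forced by the controlled‑near‑$0$ hypothesis) and $a(n)\to 0$ (since $f(n)\to\infty$), so $a(n)\le x_0$ for all $n\ge n_0$, some $n_0$. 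I would take $\eta := x_0$ (shrinking it if necessary so that also $G(\eta^-)<1$). After $\eta$‑trimming, a generation‑$n$ vertex has, for each $t<\eta/a(n)$, exactly $\Bin(f(n),G(a(n)t))$ surviving child‑edges of renormalized weight at most $t$, and these configurations are independent across vertices.

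Next I would Poissonize: from the elementary domination $\Bin(N,p)\preceq\Po(-N\log(1-p))$ and $G(a(n)t)\le G(\eta^-)$, the point process of renormalized surviving child‑weights at a generation‑$n$ vertex ($n\ge n_0$) is stochastically dominated, as a point process on $\Rplus$, by a Poisson process whose mean on $[0,t]$ is $\kappa_\eta f(n)G(a(n)t)$, with $\kappa_\eta$ a constant depending only on $G(\eta^-)$. The heart of the argument is then to bound $f(n)G(a(n)t)$ uniformly in $n\ge n_0$. Iterating the two inequalities defining ``controlled near $0$'' gives first $f(n)G(a(n))\le M'$ for $n\ge n_0$ (via $G(a(n))\le M'G(a(n)/c)\le M'G(a(n)^-)\le M'/f(n)$), and second $G(a(n)t)/G(a(n))\le \psi(t)$ for a single non‑decreasing function $\psi$ independent of $n$ with $\psi(0^+)=0$, $\psi(t)\le M't^{\log_c M'}$ for $t\ge 1$, and $\psi(t)\le (1+\beta)t^{\log_c(1+\beta)}$ for $t\le 1$; here the choice $\eta=x_0$ is precisely what confines all the arguments $a(n)c^{\pm k}$ occurring in the iteration to the controlled range $(0,x_0]$. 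Hence $f(n)G(a(n)t)\le M'\psi(t)$, and, checking also the range $t\ge\eta/a(n)$ where the surviving process has at most $\Bin(f(n),G(\eta^-))$ points, the generation‑$n$ surviving point process of $\hat{T}$ is dominated by a Poisson process with intensity measure $d\Lambda_0$, $\Lambda_0(t):=\kappa_\eta M'\psi(t)$. Finally I would replace $\Lambda_0$ by a smooth upper bound $\tilde\Lambda_0(t):=A(t^{\gamma'}+t^{\gamma})$ with $\gamma':=\log_c(1+\beta)>0$, $\gamma:=\log_c M'$ and $A$ large; then $\lambda:=\tilde\Lambda_0'$ is a locally integrable intensity with $\int_0^\infty\lambda=\infty$ and $\int_0^x\lambda=\tilde\Lambda_0(x)\le C^x$ for all $x$ and a suitable $C$, because $\tilde\Lambda_0$ has only polynomial growth.

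With this $\lambda$ fixed, the generation‑$n$ surviving point process of $\hat{T}$ is stochastically dominated by $P^{\lambda}$ for every $n\ge n_0$. Recursively and independently at each vertex, this lets one embed the subtree of $\hat{T}$ below any one of its (at most $F(n_0)=\prod_{i<n_0}f(i)$) generation‑$n_0$ vertices into an independent copy of $T^{\lambda}$ by a weight‑increasing map, whence $m_n(\hat{T})\ge\min_{k\le F(n_0)}m_{n-n_0}(T^{\lambda}_{(k)})$ for i.i.d.\ copies $T^{\lambda}_{(k)}$ of the P$^{\lambda}$WIT. Since $\int_0^x\lambda\le C^x$, part~(i) of Theorem~\ref{thm:PlambdaWITbis} provides some $\delta>0$ and, for every $K>0$, the bound $\Pr{m_m(T^{\lambda})<\delta m}\le e^{-Km}$; a union bound over the $F(n_0)$ copies together with Borel--Cantelli then yields $m_n(\hat{T})\ge\delta(n-n_0)$ for all large $n$ almost surely, so $\liminf_n m_n(\hat{T})/n\ge\delta>0$. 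This is exactly linear growth of $\hat{T}$.

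I expect the main obstacle to be the uniform‑in‑$n$ estimate on $f(n)G(a(n)t)$ in the middle step. It is there that both halves of the ``controlled near $0$'' hypothesis are genuinely used --- the upper bound $G(cx)/G(x)\le M'$ to keep the comparison intensity $\lambda$ sub‑exponential, so that part~(i) rather than part~(ii) of Theorem~\ref{thm:PlambdaWITbis} applies, and the lower bound $G(cx)/G(x)\ge 1+\beta>1$ to force $\Lambda_0(t)\to 0$ as $t\to 0$, i.e.\ to rule out an overwhelming number of very light edges --- and it is also there that the precise role of the trimming level $\eta$ is pinned down, trimming being what keeps every relevant argument of $G$ inside the range where $G$ is controlled.
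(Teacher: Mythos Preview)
Your proposal is correct and follows essentially the same approach as the paper: both arguments use the controlled-near-$0$ hypothesis to bound $G(a(n)t)/G(a(n))$ by a single piecewise power function (your $\psi$, the paper's $h$), Poissonize the surviving child-weight process to obtain domination by a P$^{\lambda}$WIT with $\int_0^x\lambda$ polynomial (hence $\le C^x$), and then invoke Theorem~\ref{thm:PlambdaWITbis}(i) together with a union bound over the finitely many generation-$n_0$ vertices. The only cosmetic difference is that the paper first passes to ordered uniforms and a Poisson process of intensity $2f(n)$ before transforming via $h^{-1}$, whereas you Poissonize the renormalized weights directly and then bound the mean measure; the key estimate and the role of the trimming level are identical.
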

Let us show how to complete the proof of Theorem~\ref{thm:main}, by using 
Proposition~\ref{hatlinear}. The following lemma is essentially what we need.
\begin{lemma}\label{lem:infinite}
 Let $\epsilon>0$ and consider a sequence of non-negative real numbers $b(i):i\in \mathbb N$ such that
 $\sum_{i=1}^n \frac{b(i)}{a(i)} \geq \epsilon n$ for all sufficiently large $n$.  Then $\sum_{i=1}^{\infty} b(i) = \infty$
\end{lemma}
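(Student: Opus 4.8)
The plan is to process the hypothesis by summation by parts against the non-increasing sequence $a$. Throughout, recall that in this part of the section $a(i)=G^{-1}(f(i)^{-1})$ is positive and non-increasing (as $f$ is non-decreasing), and that the standing assumption that $f$ is \emph{not} $G$-small is exactly the statement $\sum_{i\ge 1}a(i)=\infty$. This assumption is indispensable: if $\sum_i a(i)$ were finite, then $b(i):=\epsilon a(i)$ would satisfy $\sum_{i\le n}b(i)/a(i)=\epsilon n$ yet $\sum_i b(i)<\infty$. One easy case can be dealt with immediately: since $a$ is non-increasing it converges to some $a_\infty\ge 0$, and if $a_\infty>0$ then $a(i)\ge a_\infty$ gives $\sum_{i=1}^n b(i)\ge a_\infty\sum_{i=1}^n b(i)/a(i)\ge a_\infty\epsilon n$ for all large $n$, so $\sum_i b(i)=\infty$. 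Hence we may assume $a(i)\to 0$.

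Next I would set $S(n):=\sum_{i=1}^n b(i)/a(i)$ (so $S(0)=0$, and $S(i)\ge\epsilon i$ for all $i\ge n_0$, for some $n_0$), write $b(i)=a(i)\bigl(S(i)-S(i-1)\bigr)$, and apply Abel summation:
\[
\sum_{i=1}^n b(i)\;=\;a(n)S(n)\;+\;\sum_{i=1}^{n-1}\bigl(a(i)-a(i+1)\bigr)S(i)\;\ge\;\epsilon\sum_{i=n_0}^{n-1} i\,\bigl(a(i)-a(i+1)\bigr),
\]
where the inequality drops the non-negative terms indexed by $i<n_0$ together with $a(n)S(n)$, and uses $S(i)\ge\epsilon i$. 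Thus it suffices to show that $\sum_{i\ge n_0} i\,(a(i)-a(i+1))=\infty$.

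To see this, suppose instead that this series converges. Since $a(i)\to 0$, telescoping gives $a(m)=\sum_{i\ge m}(a(i)-a(i+1))$ for each $m$, so summing over $m\ge n_0$ and exchanging the order of summation (valid as all terms are non-negative),
\[
\sum_{m\ge n_0}a(m)\;=\;\sum_{i\ge n_0}(i-n_0+1)\bigl(a(i)-a(i+1)\bigr)\;\le\;\sum_{i\ge n_0} i\,\bigl(a(i)-a(i+1)\bigr)\;<\;\infty,
\]
contradicting $\sum_i a(i)=\infty$. Therefore $\sum_{i\ge n_0} i\,(a(i)-a(i+1))=\infty$, and with the previous display this yields $\sum_i b(i)=\infty$.

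The whole argument is elementary; the only point that needs any attention is recognising that the hypothesis must be converted, via one Abel summation and one Tonelli-type interchange, from a statement about $\sum_{i\le n}b(i)/a(i)$ into one about $\sum_i a(i)$ — and, in particular, that it is the divergence of $\sum_i a(i)$ (i.e.\ the failure of $G$-smallness), implicit in the surrounding context, that actually drives the proof; the case $a_\infty>0$ is a harmless side issue.
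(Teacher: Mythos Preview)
Your proof is correct and follows essentially the same Abel-summation approach as the paper. The only difference is organizational: the paper retains the boundary term $a(n)S(n)\ge \epsilon\, n\, a(n)$, which combines with the remaining sum to yield directly $\sum_{i=1}^n b(i)\ge \epsilon\sum_{j=n_0}^{n} a(j)$, thereby avoiding both your case split on $a_\infty$ and the separate Tonelli-type divergence argument.
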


\newcommand{\Ss}[1]{S(#1)}
\begin{proof}
 Let $n_0$ be such that the claimed inequality holds for all $n\ge n_0$.  
 We prove that for any $n$, $\sum_{i=1}^n b(i) \geq \epsilon \sum_{j=n_0}^n a(j)$, which proves the lemma, since the later sum is assumed to be divergent.  Let $c(i):=\frac {b(i)}{a(i)}$ and $\Ss j := \sum_{i=1}^j c(i)$.  By assumption, for any $j\geq n_0$, we have $\Ss j \geq \epsilon j$.
 We now have that
 \begin{align*}
  \sum_{i=1}^n b(i)&= \sum_{i=1}^n c(i)a(i) = \sum_{i=1}^n c(i)\bigl[a(n)+\sum_{j=i}^{n-1}(a(j)-a(j+1))\bigr]\\
  &= \Ss n a(n) + \sum_{j=1}^{n-1} \Ss j (a(j)-a(j+1)) \\
  &\geq \Ss n a(n) +  \sum_{j=n_0}^n \Ss j (a(j)-a(j+1))\\
& \geq \epsilon n a(n) + \sum_{j=n_0}^{n-1} \epsilon j (a(j)-a(j+1))\geq \epsilon \sum_{j=n_0}^{n} a(j).
\end{align*}
The penultimate inequality uses that $a(n)$ is a decreasing sequence. 
Since $\sum_{j=n_0}^\infty a(j) = \infty$, the lemma follows.
\end{proof}

\begin{proof}[Proof of Theorem~\ref{thm:main}]
Let $G$ be controlled near $0$ and $f:\N_0\to N$ a \nondecreasing{} function.  The proof that $f$ being $G$-small implies $f$ is $G$-explosive follows directly from Proposition~\ref{prop:onepluseps}, as explained above.

For the other direction, suppose that $f$ is not $G$-small, i.e.,
\[
\sum_{n\ge 1}G^{-1}(f(n)^{-1})\, =\, \infty\, ,
\]
we shall prove that $f$ is not $G$-explosive.  

In the case that $f$ is bounded the proof is easy.  Let $D$ be the maximum value of $f(n)$, and let $\delta_0>0$ be such that $G(\delta_0)<1/2D$.  All components of light edges (i.e., with weights at most $\delta_0$) are finite almost surely (since they correspond to sub-critical branching processes).  So every infinite path contains infinitely many edges of weight at least $\delta_0$, which demonstrates that $f$ is not $G$-explosive.

If $f(n)$ is unbounded, we shall use Proposition~\ref{hatlinear}.  First observe that we may remove all edges of weight above some $\eta>0$.  Indeed, let $E$ be the event $T_f^G$ contains an infinite path of finite weight and let $E(\eta)$ be the event $T_f^G$ contains such a path with all edge weights at most $\eta$.
It is elementary that the ratio of $\Pr{E}$ and $\Pr{E(\eta)}$ is a constant.  So to prove that $\Pr{E}=0$, it suffices to prove that $\Pr{E(\eta)}=0$.

The renormalized version of the remaining tree is precisely the tree $\hat{T}$ obtained after $\hat{T}_f^G$ is $\eta$-trimmed.  By Proposition~\ref{hatlinear}, there exists (almost surely), a constant $\delta>0$ such that $m_n(\hat{T})\ge \delta n$ for all sufficiently large $n$.  Now, let $\gamma$ be any infinite path descending from the root in the tree $T_f^G$ and using only edges with weight at most $\eta$.  Writing $b(n)$ for the weights along this path, $a(n)$ for $G^{-1}(f(n)^{-1})$ and $c(n)$ for the ratio $b(n)/a(n)$, we have that $c(n)$ are exactly the weights on the corresponding path in $\hat{T}$ and so satisfy $\sum_{i=1}^{n}c(i)\ge \delta n$ for all sufficiently large $n$.  We are now in the setting of Lemma~\ref{lem:infinite}, and we deduce that $\sum_{n\ge 0}b(n)$ is divergent.

Since the choice of the path $\gamma$ was arbitrary it follows that (almost surely) $T_f^G$ does not contain an explosive path with all weights at most $\eta$, as required.
\end{proof}

In the rest of this section, we provide the proof of Proposition~\ref{hatlinear}.
\subsection{Proof of~Proposition~\ref{hatlinear}}
It will be convenient to relabel the vertices of $\hat{T}_f^G$ (and so also $\hat{T}$) by sequences in $\finseq$.
We label the root by $\emptyset$, and for any vertex $\pwitv{i} \in \finseq$ which labels a vertex in $\hat{T}_f^G$, we label with $\pwitv{i}j$ the child of $\pwitv{i}$ for which $\hat{w}_{\pwitv{i},\pwitv{i}j}$ has the $j$'th smallest value, amongst all weights of edges to children of $\pwitv{i}$.

With Theorem~\ref{thm:PlambdaWITbis} in mind, it will suffice to couple the tree $\hat{T} $ (obtained after $\hat{T}_f^G$ is $\eta$-trimmed) with a P$^{\lambda}$WIT (with $\lambda$ controlled by an exponential) in such a way that weights in $\hat{T}$ are at least the equivalent weights in the P$^{\lambda}$WIT.  
Such a coupling is provided by the following lemma.

\begin{lemma}\label{lem:lambda} Let $G$ be controlled near $0$, and let 
$f:\N_0\to \N$ be a \nondecreasing{} function with $f(n)\to \infty$ as $n\to \infty$.  
Then there exist constants $n_0\in \N$ and $\eta,C>0$, a function $\lambda:\Rplus\to\Rplus$ with $\int_0^x\lambda\le C^x$ for all $x\in \Rplus$, and a coupling between $\hat{T}$ (obtained from $\eta$-trimming $\hat{T}_f^G$) and a P$^{\lambda}WIT$ $T^\lambda$ in which
\[
    \hat{w}_{\pwitv{i},\pwitv{i}j} \geq P^{\lambda}_{\pwitv{i}}(j)
\]
for any edge $\{\pwitv{i},\pwitv{i}j\}$ in $\hat{T}$ with $\pwitv{i}$ at level at least $n_0$.
\end{lemma}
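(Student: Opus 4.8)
The plan is to build the coupling vertex by vertex, arranging that along every edge the weight in $\hat T$ is at least the corresponding weight in $T^\lambda$ — linear growth of $\hat T$ will then follow from Theorem~\ref{thm:PlambdaWITbis} applied to $T^\lambda$. Since the weights attached to distinct vertices are independent in both trees, it suffices to couple, for each vertex $\pwitv{i}$ of a generation $n\ge n_0$, the family of rescaled and $\eta$-trimmed weights hanging below $\pwitv{i}$ in $\hat T$ with an independent copy $P^\lambda_{\pwitv{i}}$ of $P^\lambda$, so that the $j$-th smallest surviving rescaled weight is at least $P^\lambda_{\pwitv{i}}(j)$ for every $j$; running these couplings independently across vertices (with fresh randomness on the first $n_0$ generations) then realizes $\hat T$ and a P$^\lambda$WIT on a common probability space. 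Recall that in $\hat T$ the child $\pwitv{i}j$ of $\pwitv{i}$ is, by definition, the one carrying the $j$-th smallest surviving weight, so the quantity above is exactly $\hat w_{\pwitv{i},\pwitv{i}j}$.

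The deterministic input is a polynomial bound forced by $G$ being controlled near $0$. That hypothesis already rules out the degenerate cases: it forces $G(x)>0$ for every $x>0$ and $G(0)=0$ (otherwise $G(cx)/G(x)\to1$), so $a(n)=G^{-1}(f(n)^{-1})\to0$. Fix $c>1$, constants $1<a\le b<\infty$ and $x_0>0$ with $a\le G(cx)/G(x)\le b$ for $0<x\le x_0$, and set $\alpha:=\log_c a$, $\beta:=\log_c b$, so $0<\alpha\le\beta$. Iterating this inequality (separately for $r\ge1$ and $r<1$, using monotonicity of $G$) gives a constant so that $G(rx)/G(x)\le b(r^\alpha+r^\beta)$ whenever $0<x\le x_0$ and $0<rx\le x_0$; applying the upper bound with $x=a(n)/c$ and using $G(a(n)^-)\le f(n)^{-1}\le G(a(n))$ gives $f(n)G(a(n))\le b$ once $a(n)$ is small. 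Choosing $n_0$ with $a(n)\le x_0/c$ for $n\ge n_0$, and $\eta\in(0,x_0]$ with $G(\eta)\le 1/2$, we obtain for all $n\ge n_0$ and $0<s<\eta$
\[
  -f(n)\log\bigl(1-G(s)\bigr)\ \le\ 2f(n)G(s)\ \le\ C_0\Bigl(\bigl(s/a(n)\bigr)^{\alpha}+\bigl(s/a(n)\bigr)^{\beta}\Bigr)
\]
for a constant $C_0=C_0(G)$. This dictates the choice $\Lambda(t):=C_0(t^{\alpha}+t^{\beta})$, $\lambda:=\Lambda'$: then $\lambda\ge0$ is locally integrable with $\int_0^\infty\lambda=\infty$, and since $\int_0^x\lambda=C_0(x^\alpha+x^\beta)$ vanishes at $0$ and grows polynomially, there is $C>0$ with $\int_0^x\lambda\le C^x$ for all $x\in\Rplus$.

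For the coupling at a fixed vertex $\pwitv{i}$ of generation $n\ge n_0$ I would use the Rényi representation of order statistics. Let $Z_1,Z_2,\dots$ be i.i.d.\ $\mathrm{Exp}(1)$ (an independent such sequence for each vertex), put $\tilde S_j:=Z_1+\dots+Z_j$, and set $P^\lambda_{\pwitv{i}}(j):=\Lambda^{-1}(\tilde S_j)$, which defines a copy of $P^\lambda$. Put $E_{(j)}:=\sum_{k=1}^{j}Z_k/(f(n)-k+1)$ for $1\le j\le f(n)$; these are exactly the order statistics of $f(n)$ i.i.d.\ $\mathrm{Exp}(1)$ variables, so $U_{(j)}:=1-e^{-E_{(j)}}$ are the order statistics of $f(n)$ i.i.d.\ uniforms, $w_{(j)}:=G^{-1}(U_{(j)})$ are the sorted i.i.d.\ $G$-weights below $\pwitv{i}$, and $\hat w_{(j)}:=w_{(j)}/a(n)$ their rescalings; $\eta$-trimming keeps the prefix $j\le K$ with $w_{(j)}<\eta$. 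For such $j$, writing $s:=w_{(j)}\in(0,\eta)$ and using $f(n)/(f(n)-k+1)\ge1$, then $U_{(j)}\le G(s)$, then the display above,
\[
  \tilde S_j\ \le\ f(n)E_{(j)}\ =\ -f(n)\log\bigl(1-U_{(j)}\bigr)\ \le\ -f(n)\log\bigl(1-G(s)\bigr)\ \le\ \Lambda\bigl(s/a(n)\bigr)\ =\ \Lambda\bigl(\hat w_{(j)}\bigr),
\]
and applying $\Lambda^{-1}$ gives $\hat w_{\pwitv{i},\pwitv{i}j}=\hat w_{(j)}\ge P^\lambda_{\pwitv{i}}(j)$ for every edge $\{\pwitv{i},\pwitv{i}j\}$ of $\hat T$, as required.

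The point the argument is built to finesse is that fixed-$t$ stochastic domination of the counting function of the binomial-type process below $\pwitv{i}$ by a Poisson counting function would not suffice to couple the full point processes; the Rényi representation cures this by coupling the binomial order statistics to the Poisson points through a single shared exponential sequence, with the binomial points systematically pushed right by the factors $f(n)/(f(n)-k+1)\ge1$, so that $\hat w_{(j)}\ge P^\lambda_{\pwitv{i}}(j)$ holds simultaneously in $j$. The remaining work — producing $\alpha,\beta,C_0$ and checking $\int_0^x\lambda\le C^x$, and assembling the per-vertex couplings — is routine.
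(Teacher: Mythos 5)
Your proposal is correct and follows essentially the same route as the paper: a per-vertex coupling for levels $n\ge n_0$, a polynomial envelope for $G$ near $0$ extracted by iterating the ``controlled near $0$'' condition (your $\Lambda(t)=C_0(t^\alpha+t^\beta)$ plays the role of the paper's $2h$), and the choice $\lambda=\Lambda'$ with $\int_0^x\lambda\le C^x$. The only difference is cosmetic: where the paper dominates the uniform order statistics by a Poisson process of intensity $2f(n)$ capped at $G(\eta)$ and then rescales, you realize the same domination explicitly via the R\'enyi representation and the bound $-\log(1-u)\le 2u$ for $u\le 1/2$, which if anything makes that step (and the $G(G^{-1}(\cdot))$ technicalities) more transparent.
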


\begin{proof}
    We defer for now the explicit definitions of $n_0$, $C$, $\eta$ and $\lambda$.
    It is sufficient to show that for any $n \geq n_0$, and any vertex $\pwitv{i} \in \hat{T}_n$, we can couple the weights of the downward edges from $\pwitv{i}$ with the inhomogeneous Poisson point process $P^\lambda$.     The lemma then follows by applying the coupling at every vertex at distance $n_0$ or more from the root.

The weights $\hat{w}_{\pwitv{i}, \pwitv{i}j}:j=1,\dots ,f(n)$ are obtained by taking i.i.d.\ samples from the distribution $G$, dividing by $a(n)$, and then arranging in an increasing order. 
Equivalently, they are generated as
\[
\hat{w}_{\pwitv{i}, \pwitv{i}j}=\frac{G^{-1}(U(j))}{a(n)}
\]
where $U(1),\dots ,U(f(n))$ are i.i.d.\ $\Unif(0,1)$ random variables arranged in increasing order.  

Let $\eta_0=G^{-1}(1/2)$. We will later choose $\eta \leq \eta_0$,  thus ensuring $\eta$ to
satisfy $G(\eta) \leq 1/2$.

We can ignore all edges $\{\pwitv{i},\pwitv{i}j\}$ for which $U(j)\ge G(\eta)$, since these will disappear when we form $\hat{T}$ 
by $\eta$-trimming $\hat{T}_f^G$. 
Let $\jmax$ denote the largest index $j \leq f(n)$ s.t. $U(j) < G(\eta)$.

Thus one may couple the uniforms $U(1),\dots ,U(f(n))$ with a Poisson point process $Q$ of intensity $2f(n)$ on the positive real line so that 
\[
U(j)\, \ge \, Q(j) \land G(\eta) \qquad \text{for all } j\ge 1\, .
\]
It follows that there is a coupling in which
\begin{equation}\label{coup1}
\hat{w}_{\pwitv{i}, \pwitv{i}j}\, \ge \, \frac{G^{-1}(Q(j))}{a(n)} \qquad \text{for all } j=1,\dots , \jmax\, .
\end{equation}
So to prove the lemma, it suffices to demonstrate a coupling between $Q$ and $P^\lambda$ 
(for our choice of $\lambda$, which will be given shortly) such that
\[
\frac{G^{-1}(Q(j))}{a(n)}\, \ge \, P^{\lambda}(j) \qquad \text{for all } j=1,\dots, \jmax.
\]
The remainder is concerned with using the properties of $G$ to prove this statement.  

Since $G$ is controlled near $0$, there exist constants $c>1$ and $K\ge \max\{1/\eta_0,2\}$ 
    such that
\[
1+\frac{1}{K}\, <\, \frac{G(cx)}{G(x)} <\, K\, , \qquad{\text{for all }}x\le 1/K.
\]
Let $\eta = 1/K$ (note that then $\eta \leq G^{-1}(1/2)$, as assumed earlier).
It follows easily from the above, combined with the monotonicity of $G$, that for any $x_0 \in (0, \eta]$, 
\begin{align*} 
    G(x) &\leq K\cdot (x/x_0)^{\log (1 + 1/K)/\log c} G(x_0) \quad \forall x \in (0, x_0],\\
    G(x) &\leq K \cdot (x/x_0)^{\log K / \log c} G(x_0) \quad \forall x \in [x_0, \eta].
\end{align*}
More succinctly, we have that 
\begin{equation}\label{eq:Gcontrol}
G(x) \leq h(x/x_0) G(x_0) \qquad \text{for all } x, x_0 \in (0, \eta], 
\end{equation}
where
\begin{equation}\label{eq:h}
    h(z) = \begin{cases} K z^{\log(1+1/K)/\log c} & z \leq 1\\
        K z^{\log K / \log c} & z > 1 
    \end{cases}. 
    \end{equation}
One can think of this as providing some uniform control over the shape of $G$ when ``zooming in'' around some point $x_0$.
We now state our definition of $n_0$: it is chosen such that $f(n_0)\ge  K$ and $a(n_0) \leq \eta$. Let us see what the above tells us about the behaviour of $G$ about $a(n)$, for $n \geq n_0$.  
It yields
\begin{equation}\label{eq:Gcontroln}
G(x) \leq h(x/a(n)) G(a(n))\, =\, \frac{h(x/a(n))}{f(n)} \qquad \text{for all } x\in (0, \eta].
\end{equation}
Making the substitution $y=G(x)$ and applying $h^{-1}$, we obtain
\[
h^{-1}(yf(n)) \, \le\, \frac{G^{-1}(y)}{ a(n)}\,  \qquad \text{for all } y \in (0, G(\eta)].
\]
Since $Q(j) \leq G(\eta)$ for all $j \leq \jmax$, 
\begin{equation}\label{coup2}
 h^{-1}\bigl(Q(j)f(n)\bigr) \leq \frac{G^{-1}\bigl(Q(j)\bigr)}{a(n)} \qquad \text{ for all } j=1, \ldots, \jmax\, .
\end{equation}

We are now ready to give our choice of $\lambda$, and finish the proof of the lemma: 
simply define $\lambda := 2\frac{dh}{dx}$. 
Note that by the explicit form of $h$ given in~\eqref{eq:h}, it is clear that $h$ grows polynomially and so
there clearly exists a constant $C>0$ so that $\int_0^x \lambda = 2h(x) \leq C^x$ for all $x > 0$.
Moreover, the Poisson point process $P^\lambda$ with intensity $\lambda$ can be generated 
by $P^\lambda(j) = h^{-1}(Q(j)f(n))$, because $Q(j)f(n)$ is a Poisson point process of intensity $2$; 
this combined with~\eqref{coup1} and~\eqref{coup2} provide the required coupling.
\end{proof}

Take $n_0, \eta, C$ and $\lambda$ as guaranteed by the above lemma.
Then for any vertex in $\pwitv{i} \in \hat{T}_{n_0}$, the subtree rooted at $\pwitv{i}$ has linear growth almost surely, by the above coupling and Theorem~\ref{thm:PlambdaWITbis}.
Since this occurs for every vertex in $\hat{T}_{n_0}$, $\hat{T}$ itself exhibits linear growth almost surely, 
which completes the proof of Proposition~\ref{hatlinear}.

\section{Sharpness of the equivalence Theorem~\ref{thm:main}}\label{sec:examples}

In this section we give examples of pairs $(f,G)$ 
where the equivalence between $f$ being $G$-small and $f$ being $G$-explosive fails.  
We begin by giving simple examples for each direction of the equivalence.  

Note that this does not quite prove Proposition~\ref{prop:examples}, since 
the choice of $G$ used in these two simple examples differ. However, we will 
then prove Theorem~\ref{thm:lotsofcounters}, and since the choice of $G$ that we use in 
demonstrating Proposition~\ref{prop:examples}~(\ref{itm:simplenoexpl}) also satisfies the 
conditions of Theorem~\ref{thm:lotsofcounters}, Proposition~\ref{prop:examples} follows.

Recall that we write $F(n)$ for $\prod_{i=0}^{n-1}f(i)$, and note that $F(n)=|(T_f)_n|$.

\subsection{A pair $(f,G)$ where $f$ is $G$-small but not $G$-explosive}
We shall define a continuous \strictlyincreasing{} distribution function $G$ and a \nondecreasing{} function $f:\N_0 \to \N$, such that $f$ is $G$-small but not $G$-explosive.

In fact it is possible to define a general class of such examples.  Let $B_1, B_2, \dots$ 
be any \strictlyincreasing{} sequence of natural numbers.   
We may define $f$ such that the image of $f$ is precisely $\{B_i:i\ge 1\}$.

We first define $\G$, a 
distribution function that is neither continuous nor strictly increasing 
(in fact it is a \nondecreasing{} step function), with the property that $f$ is $\G$-small but not $\G$-explosive.  We then define $G$ as a tiny perturbation of $\G$ in such a way that $G$ becomes continuous and strictly increasing while maintaining the properties that $f$ is $G$-small but not $G$-explosive.

Our construction will involve defining a decreasing sequence $a_i$ of positive reals and a \nondecreasing{} sequence 
$n_i$ of natural numbers.  We will give an inductive construction of these sequences.

Set $a_0=1$ and $n_0=0$.  Given $a_{k-1}$ and $n_{k-1}$, consider the $B_k$-regular infinite tree $T_k$.  
Consider critical bond percolation on $T_k$ in which bonds are open with probability $1/B_k$.  
It is well known that all open clusters are finite almost surely.  
It follows that, for any constant $\eps>0$, there exists a constant $\xi(k,\eps)$ such that with probability at least 
$1-\eps$, every path from the root of $T_k$ to generation $\xi(k,\eps)$ uses at least $1/a_{k-1}$ closed edges.  
Set $n_k:=n_{k-1}+\xi(k,\eps_k)$, where $\eps_k:=\frac1{2F(n_{k-1})}$, and  
\[
f(n):=B_k \quad \text{for all} \quad n_{k-1}\le n<n_k\,. 
\]
Finally set $a_k:=\frac1{2^k n_k}$. This defines the sequences $a_i$, $n_i$, and the function $f: \mathbb N_0 \rightarrow \mathbb N$.

 Define now $\G$ by
\[
\G(x):=\frac 1{B_k}\quad \text{for all} \quad a_k \leq x < a_{k-1}
\]
It is easily checked that $f$ is $\G$-small.

\medskip

We now show that for each $k$ there is probability at least $1/2$ that every path from generation $n_{k-1}$ to 
generation $n_k$ has weight at least $1$.  
This is straightforward since $\xi(k,\eps)$ was precisely chosen so that with probability at 
least $1-\eps$ every path from a fixed vertex of generation $n_{k-1}$ to generation $n_k$ 
has at least $1/a_{k-1}$ edges of weight at least $a_{k-1}$, 
and therefore has total weight at least $1$.  
 The choice of $\epsilon_k$ is precisely made to 
guarantee that a union bound over vertices of generation $n_{k-1}$ complete the proof.

This property easily implies that explosion is a probability zero event, 
and so $f$ is not $\G$-explosive.

The essential content of the proof is unaffected if the atom of probability mass of $\G$ at $a_i$ is spread 
equally over the interval $[a_i,2a_i]$.  This makes the distribution function continuous. 
To make $G$ strictly increasing, add between $2a_i$ and $a_{i-1}$ probability mass with such a small 
total value that it is very unlikely that any edge in the first $n_i$ generations has a weight between 
$2a_i$ and $a_{i-1}$.  In this way the proof is again unaffected.

This defines a continuous and strictly increasing $G$ such that $f$ is $G$-small but not $G$-explosive.

\subsection{A pair $(f,G)$ where $f$ is $G$-explosive but not $G$-small}\label{sec:GexplnotGsmall}

We now define a continuous \strictlyincreasing{} distribution function $G$ and a \nondecreasing{} function 
$f:\N_0 \to \N$, such that $f$ is $G$-explosive but not $G$-small.

Again we may define a whole class of such examples.  Let $B_1,B_2,\dots$ be a sequence of natural numbers satisfying $B_{i+1}\ge 2 B_i$ for each $i\ge 1$.  We may define $f$ such that the image of $f$ is precisely $\{B_i:i\ge 1\}$.

As above, we first define an example in which $\G$ has atoms and then obtain $G$ using the same kind of perturbation trick as in the above construction.

Our construction will involve defining a decreasing sequence $a_i$ of positive reals and a \nondecreasing{} sequence $n_i$ of natural numbers.

Set $a_k:=\frac 1{k!}$ and $n_k:=(k-2)!$ for $k\ge 2$, $n_0=0$ and $n_1=1$.  Define $\G$ by
\[
\G(x):= \frac{1-1/(k-1)!}{B_k} \quad \text{for all } a_k \leq x < a_{k-1},
\]
and note that $\G^{-1}(1/B_k) = a_{k-1}$. 
Define $f:\N_0\to \N$ by
\[
f(n):=B_k \quad \text{for all } n_{k-1}\le n<n_k\, . 
\]
It is easily checked that $f$ is not $\G$-small.

To see that $f$ is $\G$ explosive with positive probability 
(and therefore with probability $1$ by the $0$-$1$ law) 
consider the following strategy for finding an infinite path of finite weight. 
Define a forest $\mathfrak{T}\subseteq T^{\G}_f$ 
by keeping the following edges: between generations $n_k=(k-2)!$ and $2(k-2)!$ keep all edges of 
weight at most $a_{k}=1/k!$, and between generations $2(k-2)!$ and $n_{k+1}=(k-1)!$ keep all 
edges of weight at most $a_{k+1}=1/(k+1)!$.  It is clear that any infinite path in 
$\mathfrak{T}$ necessarily has finite weight.  
Writing $p_k$ for the probability that a node of generation $n_k$ has no descendent in 
$(\mathfrak{T})_{n_{k+1}}$, it suffices to prove that $\sum_k p_k<\infty$.  

Let us bound $p_k$ from above.  Let $v$ be a node of $\mathfrak{T}_{n_k}$ and consider the 
tree of descendants of $v$ in $\mathfrak{T}$.  In the first $(k-2)!$ generations following $n_k$, 
each node has $\Bin(B_{k+1},\frac{1-1/(k-1)!}{B_k}))$ children.  
Denoting by $Z(.)$ the branching process with 
offspring distribution $\Bin(B_{k+1},\frac{1-1/(k-1)!}{B_k}))$, which has mean at least $3/2$, 
it is straightforward to verify that the probabilities
\[
q_k\, :=\, \Pr{ Z\bigl((k-2)!\bigr) <\left(\frac{4}{3}\right)^{(k-2)!}}
\]
are summable.  In addition, let us note that each node $u\in \mathfrak{T}_{2(k-2)!}$ 
has probability at least $(1-1/k!)^{(k-1)!}\ge e^{-1}$ to have a descendent in generation 
$n_{k+1}=(k-1)!$.  The probability $r_k$ that none of $(4/3)^{(k-2)!}$ nodes of generation 
$2(k-2)!$ has a descendent in generation $n_{k+1}$ is at most $e^{-(4/3)^{(k-2)!}}$, 
which is clearly summable.

The proof is now complete since $p_k\le q_k+r_k$.

\subsection{Proof of Theorem~\ref{thm:lotsofcounters}}
Let $G$ be a distribution function satisfying~\eqref{cond1}, i.e., such that
\[
\limsup_{i\to \infty} \frac{G(x_i)}{G(x_i/c)}\, < \, \limsup_{i\to \infty} \frac{G(cx_i)}{G(x_i)}\, =\,  \infty\, ,
\]
for some constant $c>0$ and a decreasing sequence $x_i:i\ge 1$ with limit $0$.  Restricting to a subsequence if necessary, we may assume that
\[
\frac{G(x_i)}{G(x_i/c)}\, \le \, K \, \le \, K 4^{4^i}\, \le \,  \frac{G(cx_i)}{G(x_i)} \, ,
\]
where $K$ is a constant, and $x_{i-1}\ge 4^i x_i$ for each $i\ge 1$.  We may also assume that $G(x_i)\le 1/4$.

One may now define the sequence $n_i$ by $n_0:=0$ and $n_i:= \lfloor 1/x_i\rfloor$ for $i\ge 1$, and the function $f:\N_0\to \N$ by
\[
f(n)\, :=\, \left\lfloor\frac{1}{2G(x_i)}\right\rfloor \qquad \text{for } n_{i-1}<n\le n_i\, .
\]
This choice of $f(n)$ is designed to be around $1/G(x_i)$, note that it satisfies
\[
\frac{1}{4G(x_i)}\, < \, f(n)\, <\, \frac{1}{G(x_i)}\, .
\]
Using the second inequality above and the choice of the sequence $n_i$ it is easily observed that $f$ is not $G$-small. 

Consider the forest $\mathfrak{T}\subseteq T_f^G$ obtained by keeping edges between 
generation $n_{i-1}$ and $n_{i-1}+n_i/2^i$ with weight at most $cx_i$, 
and edges between generations $n_{i-1}+n_i/{2^i}$ and $n_i$ with weight at most $x_i/ 2^{i}$.  
Clearly any infinite path in $\mathfrak{T}$ has weight at 
most $\sum_{i\ge 1}2^{-i}+\sum_{i\ge 1}2^{-i}<\infty$.  Thus, to prove that $f$ is $G$-explosive 
it suffices to prove that $\mathfrak{T}$ contains an infinite path with positive probability.
The idea of the proof is that with very high probability a node of generation $n_{i-1}$ will have a very large number of descendants in generation $n_{i-1}+n_i/{2^i}$, and each such node has a not so small probability of having a descendant in generation $n_i$.
While we do not go through every detail of the proof, it suffices to
note that between generations $n_{i-1}$ and $n_{i-1}+n_i/{2^i}$ the number of children of
each node stochastically dominates
\[
\Bin\left(f(n),K4^{4^i}G(x_i)\right)\, \ge \, \Bin\left(f(n),\frac{K4^{4^i -1}}{f(n)}\right)\, ,
\]
which has mean $K4^{4^i-1}$.  
It is therefore extremely likely that a node $v$ of generation $n_{i-1}$ has at least $e^{2^{i}n_i}$ 
descendants in generation $n_{i-1}+n_i/2^i$, denote this event $E_v$.

Between generations $n_{i-1}+n_i/{2^i}$ and $n_i$ each node has at least one child with probability at
least $K^{-2\lceil \log_c (2^i)\rceil}\ge e^{-K' i}$ for some constant $K'$.  
Therefore a node of generation $n_{i-1}+n_i/2^i$ has a descendant in generation $n_i$ 
with probability at least $e^{-K'in_i}$.
Thus, the number of descendants in generation $n_{i}$ of a node $v$ of generation $n_{i-1}$ stochastically dominates
\[
1_{E_v} \cdot \Bin((e^{2^{i}n_i},e^{-K'in_i})
\]
which is zero with very small probability.

Up to routine details this completes the proof that $\mathfrak{T}$ contains an infinite path 
with positive probability.

\medskip

The proof in the case that $G$ satisfies \eqref{cond2} is similar to the proof given in 
Section~\ref{sec:GexplnotGsmall} -- 
define a forest $\mathfrak{T}$ which grows exponentially for a period after 
generation $n_{i-1}$, then only slightly sub-critically from there until generation $n_i$.  
We omit the details.

\section{Finite height criterion for stick breaking problem}
\label{sec:stick}

Recall the definition of the random real tree $A_\ell$ constructed by a
stick breaking process on $\Rplus$ given by a sequence  $\ell(i),$ for $i\in \mathbb{N}$:
Given such a sequence, the stick breaking process defines a 
random real 
tree $A_\ell$ as follows.  Let $A_{\ell}(1)$ consist of a closed segment of length $\ell(1)$, 
seen as a (rooted) real 
tree with one edge, rooted at one end, and 
for each $i\ge 1$ let $A_{\ell}(i+1)$ be obtained by attaching one end of a closed segment
of length $\ell(i+1)$ to a uniformly random position on the real tree $A_{\ell}(i)$. 
Define 
$A_\ell$ as the completion of  $A_{\ell}^o\, =\, \bigcup_{i\ge 1}A_{\ell}(i)\, .$

For any real tree $A$, denote by $d(A)$ the height of $A$. 
Our aim in this section is to prove Theorem~\ref{thm:stick}, namely, to show that if $\ell$ 
is a decreasing sequence, then 
 \[d(A_{\ell})<\infty \quad \textrm{almost surely if and only if} \quad 
\sum_{n\ge 1}\ell(2^n)\, <\, \infty\, .
\]

The intuition behind the summability condition is roughly speaking as follows: 
traversing a path through $A_{\ell}$ the index of the segment which contains 
the $n$th edge used on the path should grow exponentially in $n$, so that the sum of the
lengths of the segments containing the edges of the path should behave like
\[
\sum_{n\ge 1}\ell(2^n)\, .
\]

The following observation indeed allows us to focus on the sum of the lengths of the 
segments (i.e., $\ell(i)$s) rather than distances in the real tree.  
Given a path $\xi$ in $A_\ell$, let $td(\xi)$ denote the sum 
of the lengths of the segments which contain the edges of $\xi$. 
Let $td(A_\ell)$ be the maximum of $td(\xi)$ over all the paths in the rooted tree $A_\ell$.

\begin{observation}\label{observation} The inequality $d(A_{\ell})\, \le\, td(A_{\ell})$ holds deterministically.  On the other hand, if $td(A_{\ell})$ is infinite then $d(A_{\ell})$ is infinite almost surely.
\end{observation}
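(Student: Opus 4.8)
The plan is to prove the two assertions of Observation~\ref{observation} separately, the first being a deterministic inequality and the second a consequence of a Borel--Cantelli-type argument exploiting the stick-breaking dynamics.

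\medskip\noindent\textbf{The deterministic inequality $d(A_\ell)\le td(A_\ell)$.}
First I would note that any point $p\in A_\ell^o$ lies on some segment that was attached at stage $i$, and the geodesic from the root to $p$ in $A_\ell^o$ traverses a sequence of (portions of) segments, entering each segment at its attachment point and leaving at the attachment point of the next segment down the path (or, for the last segment, ending at $p$). Thus the distance $d(\mathrm{root},p)$ is a sum of lengths of sub-segments of distinct sticks, which is at most the sum of the full lengths $\ell(i)$ of the sticks met, i.e.\ at most $td(\xi)\le td(A_\ell)$ for the path $\xi$ in the rooted combinatorial tree corresponding to this geodesic. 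Taking the supremum over $p\in A_\ell^o$, and then noting $A_\ell$ is the completion of $A_\ell^o$ so that $d(A_\ell)=\sup_{p\in A_\ell^o} d(\mathrm{root},p)$, gives $d(A_\ell)\le td(A_\ell)$. This part is essentially bookkeeping.

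\medskip\noindent\textbf{The implication $td(A_\ell)=\infty \Rightarrow d(A_\ell)=\infty$ a.s.}
Here the point is that $td$ counts each stick's full length regardless of how much of it is actually used, so in principle a path could have $td(\xi)$ large while the geodesic length stays bounded because it enters and exits each stick near the same end. I would rule this out by showing that, conditionally on the combinatorial structure of which sticks are attached to which, the fraction of each stick that is ``used'' on a given infinite path is bounded below with overwhelming probability. Concretely: fix a path $\xi$ realizing (or approximately realizing, via a limiting/diagonal argument) the supremum in $td$; if $td(A_\ell)=\infty$ then either some single $td(\xi)=\infty$ along an infinite path, or the supremum over finite paths diverges. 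Along such a path, when stick number $i_{k+1}$ is attached to stick $i_k$ at stage $i_{k+1}$, its attachment point is uniform on $A_\ell(i_{k+1}-1)$, hence (conditioned on $i_k$ already being present with its current length $\ell(i_k)$) uniform on that stick's length; so the distance along stick $i_k$ between consecutive attachment points used by $\xi$ is a product of two independent uniform-type factors times $\ell(i_k)$, and in particular is at least $\ell(i_k)/4$ with probability bounded below by an absolute constant, independently across the sticks on $\xi$. A Borel--Cantelli / law of large numbers argument then shows that along $\xi$ a positive proportion of the partial sums $\sum \ell(i_k)$ is realized as actual length, almost surely, so $d(A_\ell)\ge c\cdot td(\xi)=\infty$.

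\medskip\noindent\textbf{Main obstacle.}
The delicate point is the interchange of the supremum defining $td(A_\ell)$ with the almost-sure statement: $td(A_\ell)=\infty$ could be witnessed by a sequence of longer and longer \emph{finite} paths rather than a single infinite path, and the ``good event'' (that a definite fraction of stick-length is used) must be arranged to hold simultaneously for all relevant paths, or at least for enough of them, uniformly. I would handle this by working generation-by-generation in the combinatorial tree: condition on the tree structure up to some level, observe that the independence of attachment positions lets one apply a uniform large-deviation bound to the lengths realized on all paths up to that level at once (the number of paths grows, but the failure probability per stick is exponentially small in the relevant counts), and then let the level tend to infinity. The remaining details are routine estimates of the kind already used elsewhere in the paper.
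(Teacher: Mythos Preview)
Your treatment of the deterministic inequality is correct and is essentially what anyone would write.

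For the second implication, you have the same key idea as the paper: the attachment positions on the parent segments can be sampled independently of the combinatorial genealogy, so one may condition on the latter (which already determines $td$) and then treat the positions as fresh i.i.d.\ uniforms. One small correction: the portion of stick $e_{i_k}$ traversed by the geodesic is a \emph{single} uniform times $\ell(i_k)$, not a product of two uniform factors --- the geodesic enters $e_{i_k}$ at its attached endpoint, which is fixed once the combinatorics are given.

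Where you diverge from the paper is in the handling of your ``main obstacle''. You propose to control all paths simultaneously via a large-deviation/union-bound scheme. This is heavier than needed and faces its own difficulty: the genealogy tree is not locally finite, so the number of paths to a given level can be arbitrarily large and a naive union bound is not available. The paper's route is simpler and exploits the direction of the implication: to get $d(A_\ell)=\infty$ you only have to exhibit \emph{one} unbounded geodesic, not to lower-bound every path at once. Conditionally on the combinatorics, select a single infinite path $\xi$ with $td(\xi)=\infty$; its geodesic length is $\sum_k U_k\,\ell(i_k)$ with $U_k$ i.i.d.\ uniform on $[0,1]$, and this sum is infinite almost surely by the ``random subset'' fact invoked later in the proof of Theorem~\ref{thm:stick} (restrict to $\{k:U_k\ge 1/2\}$). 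That is essentially all the paper says: ``in the limit at least half of the edges connect at least half way along''.

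Your residual worry --- that $td(A_\ell)=\infty$ might be realized only as a supremum over finite paths, with no single infinite witnessing path --- is a legitimate point that the paper's one-line sketch does not address either. But it is moot in the only place the Observation is applied: in the proof of Theorem~\ref{thm:stick} a specific infinite path $\xi$ with $td(\xi)=\infty$ is constructed explicitly, so the single-path argument suffices.
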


The observation is indeed straightforward as the randomness used to position the 
endpoint of a segment $e_j$ on a current segment $e_i$ may be sampled independently.  
In the limit at least half of the edges connect at least half way along. 

\medskip

The following two lemmas will be useful in our proof of the theorem.  The first is a straightforward monotonicity statement.  The second will be used to show that when $\sum_{n\ge 1}\ell(2^n)$ is divergent, it cannot be that $\ell(i)$ is always much smaller than the average of $\ell(j):j\le i$.

\begin{lemma}\label{lem:mono}
Suppose $R,S$ are two subsets of $\mathbb{N}$ and for all $j\in \mathbb N$,
\[
|R\cap\{1,\dots ,j\}|\, \le\, |S\cap \{1,\dots ,j\}|.
\]
Then
\[
\sum_{i\in R}\ell(i)\, \le\, \sum_{i\in S} \ell(i) \, .
\]
\end{lemma}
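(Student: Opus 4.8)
The statement is a standard rearrangement/majorization fact, so the plan is to reduce it to the case where $R$ and $S$ differ in a single element, and then iterate. First, observe that the hypothesis $|R\cap\{1,\dots,j\}|\le|S\cap\{1,\dots,j\}|$ for all $j$ is precisely the statement that there is an injection $\phi:R\to S$ with $\phi(i)\le i$ for every $i\in R$; this is a routine greedy/Hall-type construction (process the elements of $R$ in increasing order, always mapping the current element to the smallest not-yet-used element of $S$ that is $\le$ it, the counting hypothesis guaranteeing one exists). Since $\ell$ is decreasing (this is the standing assumption of the section, and is what makes the lemma true), $\ell(\phi(i))\ge\ell(i)$ for each $i\in R$, and summing over $i\in R$ gives
\[
\sum_{i\in R}\ell(i)\,\le\,\sum_{i\in R}\ell(\phi(i))\,=\,\sum_{i\in\phi(R)}\ell(i)\,\le\,\sum_{i\in S}\ell(i)\,,
\]
the last inequality because $\phi(R)\subseteq S$ and all terms $\ell(i)$ are non-negative (they are lengths). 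This already finishes the proof.

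An alternative, perhaps cleaner, presentation avoids the injection entirely by using the layer-cake / Abel summation identity. Writing $r_j:=|R\cap\{1,\dots,j\}|$ and $s_j:=|S\cap\{1,\dots,j\}|$, and $\delta\ell(j):=\ell(j)-\ell(j+1)\ge0$, one has $\sum_{i\in R}\ell(i)=\sum_{j\ge1}r_j\,\delta\ell(j)$ and likewise for $S$ (both sums converging, or being $+\infty$, simultaneously on each side; one should be slightly careful about convergence but since everything is non-negative the manipulation is valid termwise). Then $r_j\le s_j$ and $\delta\ell(j)\ge0$ give the inequality term by term. I would probably include this version since it is self-contained and sidesteps Hall's theorem.

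There is essentially no obstacle here: the only points requiring a word of care are (i) that the lemma implicitly uses that $\ell$ is decreasing and non-negative — without monotonicity it is false — and (ii) making sure the rearranged sums are legitimate when one side is infinite, which is immediate from non-negativity of all terms. I would state the proof in the Abel-summation form in two or three lines and move on.
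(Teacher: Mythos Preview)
Your first argument---building an injection $\phi:R\to S$ with $\phi(i)\le i$ and using monotonicity of $\ell$---is exactly the paper's approach; the only cosmetic difference is that the paper constructs the injection on the finite truncations $R\cap\{1,\dots,j\}\hookrightarrow S\cap\{1,\dots,j\}$ and then passes to the limit, rather than building one global $\phi$. Your Abel-summation alternative is correct and is a genuinely different (and arguably cleaner) route the paper does not take; just note that the identity $\sum_{i\in R}\ell(i)=\sum_{j\ge1}r_j\,\delta\ell(j)$ needs the boundary term $r_N\ell(N)$ when truncated at $N$, which is also dominated by $s_N\ell(N)$, so the comparison still goes through.
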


\begin{proof}
The sums are limits of the partial sums up to $j$.  
For any fixed $j$, we can find an injection $\mathfrak i: R\cap\{1,\dots ,j\} \hookrightarrow
S\cap\{1,\dots ,j\}$ 
such that $\mathfrak i(i) \leq i$ for any $i\in R\cap\{1,\dots ,j\}$. 
Since $\ell$ is decreasing, we get 
$$\sum_{i\in R\cap\{1,.\dots, j\}} \ell(i) \leq \sum_{i\in R\cap\{1,.\dots, j\}}\ell(\mathfrak i(i)) 
\leq \sum_{i\in S\cap\{1,.\dots, j\}}\ell(i)$$
for partial sums, from which the lemma follows.
\end{proof}

\begin{lemma}\label{lem:D}
Let $\ell(i)$ be an integer sequence and let 
\[
\D\, :=\, \left\{n\in \mathbb{N}: \, \, \ell(2^n)\, > \, \frac{\ell(2^m)}{2^{(n-m)/2}}\quad  \text{for all } \, m< n\right\}\,.
\]
Then 
\[\sum_{n\ge 1}\ell(2^n)\,\,\, \textrm{is divergent} \,\, \textrm{if and only if} \,\, 
\sum_{n\in \D}\ell(2^n)\,\,\, \textrm{is divergent}.
\]
\end{lemma}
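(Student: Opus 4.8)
The plan is to prove the contrapositive of the nontrivial direction: if $\sum_{n \in \D} \ell(2^n)$ diverges then $\sum_{n \ge 1} \ell(2^n)$ diverges. (The reverse implication is trivial since $\D \subseteq \N$ and $\ell$ is nonnegative; actually the only content is showing divergence of the full sum implies divergence of the restricted sum.) So I would instead prove: \emph{if $\sum_{n \in \D} \ell(2^n)$ converges, then $\sum_{n \ge 1} \ell(2^n)$ converges.} The idea is that every index $n \notin \D$ is ``dominated'' by some earlier index $m \in \N$ with $\ell(2^n) \le \ell(2^m) 2^{-(n-m)/2}$, and by iterating this domination we can trace back to an index lying in $\D$ itself (the process must terminate because indices strictly decrease). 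This will let us bound $\sum_{n \ge 1} \ell(2^n)$ by a geometrically weighted sum over $\D$.

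First I would make precise the ``tracing back'' step: for any $n \ge 1$, I claim there exists $m = m(n) \in \D$ with $m \le n$ and $\ell(2^n) \le \ell(2^m) \cdot 2^{-(n-m)/2}$. This is proved by strong induction on $n$: if $n \in \D$ take $m = n$ (the inequality is an equality). If $n \notin \D$, then by definition there is some $m' < n$ with $\ell(2^n) \le \ell(2^{m'}) 2^{-(n-m')/2}$; apply the inductive hypothesis to $m'$ to get $m \in \D$ with $m \le m'$ and $\ell(2^{m'}) \le \ell(2^m) 2^{-(m'-m)/2}$, and multiply the two inequalities, noting $2^{-(n-m')/2} \cdot 2^{-(m'-m)/2} = 2^{-(n-m)/2}$.

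Next I would sum. For each $m \in \D$, the indices $n \ge m$ that get mapped (via some valid choice) to $m$ all satisfy $\ell(2^n) \le \ell(2^m) 2^{-(n-m)/2}$, so
\[
\sum_{n \ge 1} \ell(2^n) \;\le\; \sum_{m \in \D} \;\sum_{n \ge m} \ell(2^m) \, 2^{-(n-m)/2} \;=\; \Bigl(\sum_{k \ge 0} 2^{-k/2}\Bigr) \sum_{m \in \D} \ell(2^m) \;=\; \frac{1}{1 - 2^{-1/2}} \sum_{m \in \D} \ell(2^m),
\]
where the first inequality is just the observation that every $n$ is accounted for by at least one $m \in \D$ with the stated bound (so summing over all $(m,n)$ pairs with $m \in \D$, $n \ge m$, and $\ell(2^n) \le \ell(2^m)2^{-(n-m)/2}$ overcounts). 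Hence convergence of $\sum_{m \in \D} \ell(2^m)$ forces convergence of $\sum_{n \ge 1} \ell(2^n)$, which is exactly the contrapositive we wanted.

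I do not expect any real obstacle here; the only point requiring a little care is making the double-counting bookkeeping rigorous — one must be slightly careful that when $n \notin \D$ there may be several valid choices of $m'$, but we only need \emph{existence} of one chain terminating in $\D$, and the final sum is an overcount over \emph{all} admissible pairs, so no choice function needs to be fixed. (Alternatively, one can fix $m(n)$ to be, say, the largest valid $m' < n$ when $n \notin \D$, turning the map $n \mapsto m(n)$ into a genuine function and making the fibers disjoint; then the inequality becomes $\sum_n \ell(2^n) = \sum_{m \in \D} \sum_{n : m(n) = m} \ell(2^n) \le \sum_{m \in \D} \sum_{n \ge m} \ell(2^m) 2^{-(n-m)/2}$, which is cleaner.) The hypothesis that $\ell$ is an integer sequence plays no role in this argument and can be ignored.
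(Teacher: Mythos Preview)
Your proposal is correct and follows essentially the same approach as the paper. The paper's version is slightly slicker in one respect: rather than proving by strong induction that each $n$ admits some $m\in\D$ with the desired inequality, it directly defines $\pi(n)$ to be the \emph{least} $m$ with $\ell(2^n)\le \ell(2^m)2^{-(n-m)/2}$, and observes that minimality forces $\pi(n)\in\D$ (your induction is exactly what unwinds this observation). After that, both proofs partition $\N$ into fibers over $\D$ and bound each fiber by the same geometric series.
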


\begin{proof} For each $n\in \mathbb{N}$ define $\pi(n)$ to be the least natural number $m$ such that
\[
\ell(2^n)\, \le \, \frac{\ell(2^m)}{2^{(n-m)/2}}\, .
\]
Note that $n\in \D$ if and only if $\pi(n)=n$, and $\pi(\pi(n)) = \pi(n)$ for any $n\in \mathbb N$,
in other words, $\pi$ is a projection from 
$\mathbb N$ to $\D$.
For each $m\in \D$, let 
\[
H_m \, :=\, \{n:\pi(n)=m\} \, .
\]
The lemma follows immediately from the observations that
\[
\mathbb{N}\, =\, \bigcup_{m\in \D}H_m
\]
and that for any $m\in \D$, 
\[
\sum_{n\in H_m}\ell(2^n)\, \leq \, \sum_{n=m}^{\infty} \frac{\ell(2^m)}{2^{(n-m)/2}}\, \le\, 4\ell(2^m)\, .
\]
\end{proof}

We are now ready to prove Theorem~\ref{thm:stick}.

\begin{proof}
Neither direction is trivial.  We begin by showing that if
\[
\sum_{n\ge 1}\ell(2^n)\, <\, \infty\,,
\]
then $d(A_\ell)$ is finite almost surely.  Let us identify a path 
$\xi$ in $A_\ell$ with the set of segments it uses, and further, identify $\xi$ with the subset 
of $\mathbb{N}$ of indices of these segments.  The length of the path $\xi$ is obviously bounded 
by $\sum_{i\in \xi}\ell(i)$.  By Lemma~\ref{lem:mono} it 
suffices to show that every path $\xi$ in $A_{\ell}$ verifies 
\[
|\xi\cap\{1,\dots ,j\}|\, \le\, |R\cap \{1,\dots ,j\}| \qquad \text{for all } j\in \mathbb{N}\, .
\]
for some set $R$ of the form $R=\{\lfloor e^{\delta n}\rfloor :n\ge 1\}$.

We prove this using our result, Theorem~\ref{thm:PlambdaWITbis}, 
on linear growth in generalizations of the PWIT.  
We can construct an infinite weighted (random) tree $T$ associated with $A_{\ell}$. As a (combinatorial) 
tree, $T$ is the genealogy tree of $A_\ell$, formally defined as follows: 
To each segment $e_i$ (of length $\ell(i)$) used in $A_{\ell}$ we associate a vertex 
$v_i$ of $T$.  The vertex $v_1$ will be the root of $T$, and the vertex $v_i$ is a child of 
another vertex 
$v_j$ if $j<i$ and in the construction of $A_\ell$, the segment $e_i$ is attached to a point of 
$e_j$.  The weight of the edge $v_jv_i$ is defined to be $\log{j}-\log{i}$.  For technical reasons we shall ignore the root and the edges to the children of the root.  Consider the subtree beneath some vertex $v_i$ that is a child of the root.  The probability that $v_j$ will be a child of $v_i$ for $j>i$ is at most $1/i$, therefore the distribution of the number of children of $v_i$ of weight at most $w>0$ is stochastically dominated by the binomial distribution
\[
\Bin(\lfloor e^{w}i-i\rfloor ,1/i)\, =\, \Bin(\lfloor i(e^w -1)\rfloor ,1/i)\, \le\, \Po\left(\int_0^w C^t \, dt\right)\,
\]
for an appropriately chosen constant $C>1$ (in fact one may take $C=e^2$).
Theorem~\ref{thm:PlambdaWITbis} now tells us that all subtrees of the children of the root exhibit linear growth, and furthermore the probability of a path to generation $n$ of weight less that $\delta n$ (for some $\delta>0$) is at most $e^{-2n}$.  The same is true deterministically for paths beginning with an edge of weight at least $n$.  Since at most $e^n$ children of the root have weight less than $n$, a union bound yields that
\[
m_n(T)\, \ge \, \delta n
\]
with probability at least $1-e^{-n}$.  And so, almost surely, $m_n(T)\ge \delta n$ for all sufficiently large $n$

This translates into the fact that every path $\xi$ in $A_\ell$, when viewed as a subset of $\mathbb{N}$ has $n$th element at least $e^{\delta n}$ for all sufficiently large $n$.  This completes the required comparison with a set $R$, and so completes this half of the proof.

\medskip

For the other direction, let us assume that the sum
\[
\sum_{n\ge 1}\ell(2^n)
\]
is divergent.  Applying Lemma~\ref{lem:D}, we have that
\[
\sum_{n\in \D}\ell(2^n)\, 
\]
is divergent, where
\[
\D\, :=\, \left\{n\in \mathbb{N}: \ell(2^n)\, \ge \, \frac{\ell(2^m)}{2^{(n-m)/2}}\, \text{for all } m< n\right\}\, .
\]
Let $\overline{\D}$ denote the set $\bigcup_{n\in \D}\{2^{n-1}+1,\dots ,2^n\}$, 
and consider the path $\xi$ in $A_{\ell}$ generated as follows: the first segment 
of $\xi$ is $e_1$, thereafter $\xi$ chooses to connect to the segment $e_i$ with minimal index 
$i\in \overline{\D}$ that is a descendant of its latest segment.  
By Observation~\ref{observation}, 
we complete a proof of the theorem by proving that $td(\xi)$ is 
infinite almost surely.

Note that for a sequence of non-negative reals $\bigl(a(i)\bigr)$, if the sum $\sum_{i\in \mathbb N}a(i)$ 
diverges, then a sum of the form 
$\sum_{i\in S}a(i)$ for a random subset $S\subset \mathbb N$ which contains 
each index $n\in N$ independently with probability $p>0$, will diverge almost surely.  
Similarly, if there is not necessarily independence between the inclusion of indices in $S$ 
but $S$ 
is constructed recursively 
so that the inclusion of $n$ in $S$ (given all the previous information) 
has probability at least $p>0$, then again the sum $\sum_{i \in S}a(i)$ will diverge almost surely.  

\noindent For this reason, it suffices to show that with probability at least some constant $p>0$ 
the path $\xi$ will contain a segment $e_i$ with $2^{n-1}+1\le i\le 2^n$. 

Given the path $\xi$ up to inclusion of some edge $e_j:j\in \overline{\D}$, let $m\in \D$ be 
the integer such that $2^{m-1}<j\le 2^m$, and let $n\in \D$ be minimum integer in $\D$ 
such that $2^{n-1}\ge j$. We complete the proof by bounding below the probability that $\xi$ 
contains a segment $e_i$ with $2^{n-1}+1\le i\le 2^n$.  Until $2^n$ segments have been attached in the construction of $A_\ell$, 
the total length may be bounded above by
\[
\sum_{k=1}^{n}2^{k-1}\ell(2^{k-1})\, \le\, \sum_{k=1}^{n}2^{k-1}2^{(n-k)/2}\ell(2^{n})\, \le \, 2^{n+2}\ell(2^n)\, \le \, 2^{n+2}\ell(j)\, .
\]
And so, each segment added has probability at least $2^{-n-2}$ of being attached to $e_j$.  
The probability that no segment $e_i$ with $2^{n-1}<i\le 2^{n}$ joins to $e_j$ is 
therefore at most
\[
\left(1-\frac{1}{2^{n+2}}\right)^{2^{n-1}}\, \leq\, e^{-1/8}\, .
\]
This completes the proof.
\end{proof}

\medskip

\subsection*{Acknowledgements}
This project grew out of discussions started at the McGill University's Bellairs Institute, Barbados. 
We would like to thank Nicolas Curien for asking 
a question which led to the results in 
Section~\ref{sec:stick}, and thank B\'en\'edicte Haas and him for sharing a draft of their work 
at the final stage of the preparation of this paper.
Special thanks to the Brazilian-French Network in Mathematics 
for providing generous support for a visit of S.G. at ENS Paris. 
N.O. was supported by a NWO Veni grant.


\begin{thebibliography}{WWW}

\bibitem{AdGK} Addario-Berry, L., Griffiths, S., and Kang, R.~J. (2012). Invasion percolation on the Poisson-weighted infinite tree. 
\textsl{Ann.\ Appl.\ Probab.} {\bf 22}(3), 931–-970. 

\bibitem{AdRe09} Addario-Berry, L. and Reed, B. (2009). Minima in branching random walks, \textsl{Ann.\ Appl.\ Probab.} 
{\bf37}, 1044-1079.

\bibitem{Ai11} A\"idekon, E. (2013). Convergence in law of the minimum of a branching random walk. 
\textsl{Ann.\ Probab.} {\bf 41}(3A), 1362-1426.

\bibitem{AiSh11} A\"idekon, E. and Shi, Z. (2014). The Seneta-Heyde scaling for the branching random walk. 
\textsl{Ann.\ Probab.} {\bf 42}(3), 959-993. 

\bibitem{Ald91} Aldous, D.~J. (1991). The continuum random tree I. \textsl{Ann. Probab.} {\bf 19}, 1--28.

\bibitem{Ald92} Aldous, D.~J. (1992). Asymptotics in the random assignment problem. \textsl{Probab.\
Th.\ Rel.\ Fields} {\bf 93}, 507--534.

\bibitem{Ald93} Aldous, D.J. (1993). The continuum random tree III. \textsl{Ann. Probab.} {\bf 21}, 
248--289.


\bibitem{AlPi00} Aldous, D.~J. and Pitman J. (2000). Inhomogeneous continuum random 
trees and the entrance boundary of the additive coalescent. \textsl{Probab. Th. Rel. Fields} {\bf 118}, 
455--482.

\bibitem{AS} Aldous, D.~J.
and Steele, J.~M. (2003). The objective method: Probabilistic combinatorial optimization and local weak convergence. In
\textsl{Probability on Discrete
Structures}, Encyclopedia of Mathematical Sciences (H. Kesten, ed.) {\bf 110} 1--72. Springer, Berlin.

\bibitem{ADGO} Amini, O., Devroye, L., Griffiths, S., and Olver, N. (2013).  
On explosions in heavy-tailed branching random walks. {\it Ann.\ Probab.} {\bf 41}(3B), 1864--1899.

\bibitem{Big76} Biggins, J.~D. (1976) 
The first- and last-birth problems for a multitype age-dependent branching process. \textsl{Adv. Appl. Probab.} {\bf 8}, 446--459.

\bibitem{Big77} Biggins, J.~D. (1977). Chernoff's Theorem in the branching random walk. 
\textsl{J.\ App.\ Prob.} {\bf 14}(3), 630-636.


\bibitem{BCC} Bordenave, C., Caputo, C., and Chafa\"i, D. (2011). Spectrum of large random reversible Markov chains: heavy-tailed weights on the complete graph. \textsl{Ann.\ Probab.} 
{\bf 39}(4), 1544--1590.

\bibitem{Bra78} Bramson, M.~D. (1978). Minimal displacement of branching random walks. \
textsl{Zeitschrift f\"ur Wahrscheinlichkeitstheorie und verwandte Gebiete} \textbf{45}, 89--108.   

\bibitem{Curien} Curien, N. (2014). Open question at the Probability, Combinatorics and Geometry workshop held at McGill University's Bellairs Institute, Barbados.

\bibitem{CH} Curien, N. and Haas B., Random trees constructed by aggregation. Preprint.

\bibitem{DeHo91} Dekking, F.~M. and Host, B. (1991). Limit distributions for minimal displacement of
branching random walks. \textsl{Probab.\ Th.\ Rel.\ Fields} \textbf{90}, 403--426.

\bibitem{GH14} Goldschmidt C. and Haas B. (2014). 
A line-breaking construction of the stable trees. \textsl{Available at http://arxiv.org/abs/1407.5691} 

\bibitem{Ham74} Hammersley, J.~M. (1974). Postulates for subadditive processes. \textsl{Ann.\ Probab.} \textbf{2}, 652--680. 

\bibitem{HuSh09} Hu, Y. and Shi, Z. (2009). Minimal position and critical martingale convergence
in branching random walks, and directed polymers on disordered trees. \textsl{Ann.\ Probab.} \textbf{37},  742--789.

\bibitem{Kin75} Kingman, J.~F. (1975). The first birth problem for an age-dependent branching process. 
\textsl{Ann.\ Probab.} \textbf{3},  790--801.
 
 \bibitem{McD95} McDiarmid, C. (1995). Minimal positions in a branching random walk. \textsl{Ann.\ Appl.\ Probab.} 
 \textbf{5}, 128--139.

 \bibitem{PP} Pemantle, R. and Peres, Y. (1994). Domination between trees and application to an explosion problem. 
 {\it Ann.\ Probab.} {\bf 22}(1), 180--194.


\bibitem{Vat96} Vatutin, V.~A. (1996). On the explosiveness of nonhomogeneous age-dependent
branching processes. \textsl{Theory Probab.\ Math.\ Statist.} {\bf 52},  39--42; translated from Teor. Imovir. Mat. Stat. 
No. 52 (1995), 37--40 (Ukrainian).

 
 \bibitem{VZ93} Vatutin, V.~A. and Zubkov, A.~M. (1993). Branching processes II.  Probability theory and mathematical statistics, 1. 
\textsl{ J.\ Soviet Math.}  {\bf 67}, 3407--3485.
    
\end{thebibliography}
    \end{document}